\numberwithin{equation}{section}
\newcommand{\ZZ}{\mathbb{Z}}
\newcommand{\NN}{\mathbb{N}}
\renewcommand{\tilde}{\widetilde}
\DeclareMathOperator{\lcm}{lcm}
\DeclareMathOperator{\Cl}{Cl}
\DeclareMathOperator{\link}{link}
\DeclareMathOperator{\cpxstar}{star}
\newcommand{\goodsimp}{\omega}
\newcommand{\xmax}{n}
\newcommand{\ymax}{k}
\newcommand{\bp}{\diamondsuit}
\newcommand{\newvert}{{v'}}
\newcommand{\redH}{\widetilde{H}}
\newcommand{\field}{\mathbf{k}}
\newcommand{\defn}[1]{\textit{#1}}
\newtheorem{theorem}[equation]{Theorem}
\newtheorem*{theorem*}{Theorem}
\newtheorem{lemma}[equation]{Lemma}
\newtheorem{cor}[equation]{Corollary}
\newtheorem{prop}[equation]{Proposition}
\newtheorem*{prop*}{Proposition}
\theoremstyle{remark}
\newtheorem{remark}[equation]{Remark}
\newtheorem{example}[equation]{Example}
\newtheorem{definition}[equation]{Definition}
\begin{document}
\title{Subdividing simplicial virtual resolutions with homology}

\author{Eric Nathan Stucky}
\author{Jay Yang}
\begin{abstract}
  While sporadic examples of virtual resolutions with homology have been constructed,
  their occurrence is not well understood or controlled.
  Our results build a new set of tools for studying virtual resolutions of monomial ideals as arising from simplicial complexes, including characterizing them by the acyclicity of certain induced subcomplexes.
  Using this characterization, we give a description of minimal simplicial complexes supporting virtual resolutions as well as a technique for removing homology from simplicial virtual resolutions.
\end{abstract}
\maketitle

\section{Introduction}

Virtual resolutions, as defined in \cite{virtual-res}*{Definition~1.1} form an important tool in the study of syzygies over toric varieties other than $\mathbb{P}^n$.
For a toric variety $X$, we can associate a polynomial ring $S$ called the Cox ring, and a monomial ideal $B$ called the irrelevant ideal. 
Virtual resolutions are complexes of free modules $F_\bullet$ such that $H_i(F_\bullet)$ for $i>0$ must be annihilated by a sufficiently high power of the irrelevant ideal $B$.
From a more geometric perspective, for a module $M$, we say that $F_{\bullet}$ is a virtual resolution of $M$ if the corresponding complex of sheaves $F_{\bullet}^{\sim}$ is a free resolution of $M^\sim$. We defer the more detailed discussion of this definition to Section~\ref{sec:virtual-resolutions}.

Among current constructions of virtual resolutions, almost all seem to produce free resolutions in practice. Many constructions proceed explicitly by giving a free resolution of a different module, exploiting the fact that the correspondence between sheaves and modules is many-to-one. This includes the constructions from \cite{virtual-res} and \cite{HHL24} as well as the monomial constructions from \cites{virtual-cm-paper,virtual-shelling}. Some of the few documented exceptions are \cite{lauren-truncation}*{Example~6.7} as well as the examples in \cite{virtual-eagon-northcott}*{Section~4}.

The lack of examples of virtual resolutions with homology and the lack of results addressing the occurrence of homology in virtual resolutions represents a critical gap in our understanding of virtual resolutions. This paper contributes an approach to this question in the monomial case by extending the work on simplicial and cellular resolutions from \cites{bps-monomial-resolutions,bs-cellular} to study virtual resolutions.

For a simplicial complex $\Delta$ with simplices labeled by monomials, one can associate a chain complex $F_\Delta$ using the techniques of the papers \cite{bps-monomial-resolutions}. We describe this construction and its interaction with virtual resolutions in more detail in Section~\ref{sec:virtual-resolutions}. Just as \cite{bps-monomial-resolutions}*{Lemma~2.2} gives an acyclicity condition describing when a labeled simplicial complex gives rise to a free resolution, we can state the following description of when the chain complex $F_\Delta$ is a virtual resolution.

\begin{theorem}
  \label{thm:virtual-resolution}
  Fix a smooth toric variety $X$ with Cox ring $S$ and irrelevant ideal $B$.
  For a labeled simplicial complex $(\Delta,\ell)$, 
  its associated chain complex $F_\Delta$ is a virtual resolution if and only if
  for each of the subcomplexes \[\Delta_m:=\{\sigma \in \Delta : \ell(\sigma)\text{ divides }m\},\]
  there exists some $d\geq 0$ such that $\redH_i(\Delta_m;\field)=0$ for all monomials $m\in B^{d}$ and integers $i\geq 0$.
\end{theorem}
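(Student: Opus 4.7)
My plan is to reduce the virtual resolution condition on $F_\Delta$ to a condition on the subcomplexes $\Delta_m$ via the standard multigraded identification
\[ H_i(F_\Delta)_m \;\cong\; \redH_{i-1}(\Delta_m;\field)\qquad (i\geq 0), \]
which arises because the multidegree-$m$ strand of $F_\Delta$ is (a shift of) the reduced simplicial chain complex of $\Delta_m$. Under this identification, multiplication by a monomial $b$ on $H_i(F_\Delta)$ carries the multidegree-$m$ summand to the multidegree-$(mb)$ summand, and corresponds to the map on $\redH_{i-1}$ induced by the inclusion $\Delta_m\hookrightarrow\Delta_{mb}$. Consequently, $F_\Delta$ is a virtual resolution if and only if $B^d\cdot H_i(F_\Delta)=0$ for some $d$ and all $i\geq 1$, equivalently, there exists $d\geq 0$ such that for every monomial $b\in B^d$, every monomial $m$, and every $j\geq 0$, the inclusion-induced map $\redH_j(\Delta_m;\field)\to\redH_j(\Delta_{mb};\field)$ vanishes.

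The reverse implication is immediate: if $\redH_j(\Delta_m;\field)=0$ for every $m\in B^d$, then for any $b\in B^d$ the codomain already vanishes (since $mb\in B^d$), so every such inclusion map is zero.

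For the forward direction, I would upgrade the vanishing of inclusion maps to outright acyclicity by factoring the \emph{identity} on $\Delta_{m'}$ through a suitable inclusion. Set $L:=\lcm_{\sigma\in\Delta}\ell(\sigma)$, a well-defined monomial by finiteness of $\Delta$. Since each label divides $L$, a direct check shows that $\Delta_{m'}=\Delta_{\gcd(m',L)}$ for every monomial $m'$. Writing $m'':=\gcd(m',L)$ and $b:=m'/m''$, the inclusion $\Delta_{m''}\hookrightarrow\Delta_{m'}$ is the identity of subcomplexes and thus induces the identity on $\redH_j$; but by the first paragraph this same map equals multiplication by $b$ on $H_{j+1}(F_\Delta)$, which vanishes whenever $b\in B^d$ by the virtual resolution hypothesis. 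Hence the identity on $\redH_j(\Delta_{m'};\field)$ is forced to be zero, and the group itself must vanish.

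It remains to arrange that $m'\in B^{d'}$ implies $b=m'/\gcd(m',L)\in B^d$ for an appropriately chosen $d'$. The exponents of $b$ differ from those of $m'$ by at most the (bounded) exponents of $L$, so membership in $B^d$ should be stable under stripping off the $L$-part once $m'$ sits deep enough in the filtration $\{B^k\}_k$. I expect to make this precise by showing that the monomial colon ideal $(B^{d'}:x^K)$ is contained in $B^d$ for $d'$ sufficiently large in terms of $d$ and $K$, then taking the maximum over the finitely many divisors $K$ of $L$. This ideal-theoretic bookkeeping, carried out using the explicit monomial generators $x^{\hat\sigma}$ of $B$, is the main obstacle; once it is in place, the identity-equals-zero argument above ties the proof together.
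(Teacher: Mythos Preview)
Your proposal is correct and follows the same overall strategy as the paper: both proofs hinge on the multigraded identification $H_i(F_\Delta)_\alpha\cong\redH_{i-1}(\Delta_{x^\alpha};\field)$ (the paper's Theorem~\ref{thm:cellular-subcomplexes}), together with the characterization of virtuality for smooth $X$ as $B^d\cdot H_i(F_\Delta)=0$ for some $d$. Your reverse direction---the codomain $\redH_j(\Delta_{mb};\field)$ already vanishes once $mb\in B^d$---is exactly the paper's argument, which phrases it as ``the image of $\mu_m$ lands in degrees $\alpha'\geq\alpha$, where the graded pieces vanish.''

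Where you and the paper differ is in the forward direction. The paper disposes of it in one sentence, asserting without justification that $B$-torsionness of $H_i(F_\Delta)$ yields a $d$ with $H_i(F_\Delta)_\alpha=0$ for all $x^\alpha\in B^d$. Your $L$-trick is a genuinely cleaner and more self-contained argument: by writing $m'=b\cdot\gcd(m',L)$ you realize the \emph{identity} of $\redH_j(\Delta_{m'};\field)$ as the multiplication-by-$b$ map, which is zero once $b\in B^{d_0}$. This avoids ever naming generators of $H_i(F_\Delta)$. The colon-ideal step you flag as the main obstacle is indeed routine: if $B$ has $r$ minimal generators, pigeonhole gives $g^{\lfloor d'/r\rfloor}\mid m'$ for some generator $g$ whenever $m'\in B^{d'}$; since $\gcd(m',L)$ has each exponent bounded by the exponents $c$ of $L$, one gets $g^{\lfloor d'/r\rfloor-c}\mid b$, hence $b\in B^{\lfloor d'/r\rfloor-c}$. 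Choosing $d'\geq r(d_0+c)$ suffices. So your ``main obstacle'' is a two-line pigeonhole computation, and the proof is complete as outlined.
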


This description then allows us to construct examples of virtual resolutions with homology. For now, the following example serves as a good prototype of such a virtual resolution. Sections~\ref{sec:minimal-complexes} and \ref{sec:products-case} expand more on the ideas behind this example to produce a large class of examples of virtual resolutions with homology.

\begin{example}
\label{ex:intro-virtual}
Recall that when $X$ is the product of projective spaces $\mathbb{P}^2\times\mathbb{P}^1$ it has Cox ring $S=\field[x_0,x_1,x_2,y_0,y_1]$ and irrelevant ideal $B=\langle x_0,x_1,x_2\rangle \cap \langle y_0,y_1\rangle$. Consider the following labeled simplicial complex $(\Delta,\ell)$ on vertices $\{v_0,v_1,w_0,w_1\}$ depicted in Figure~\ref{fig:intro-virtual}.

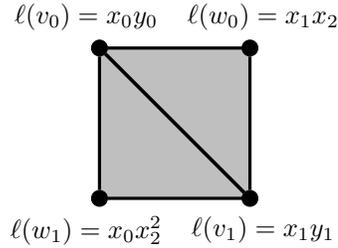
\begin{figure}[h!]
  \[
  \begin{tikzpicture}
    \fill[lightgray] (0,0) -- (2,0) -- (0,2) -- cycle;
    \fill[lightgray] (2,2) -- (2,0) -- (0,2) -- cycle;
    \node[circle, draw, inner sep=0pt, minimum size=6pt, fill=black, label=below:{$\ell(w_1)=x_0x_2^2\quad$}] (2) at (0,0) {};
    \node[circle, draw, inner sep=0pt, minimum size=6pt, fill=black, label=above:{$\quad\ell(w_0)=x_1x_2$}] (3) at (2,2) {};
    \node[circle, draw, inner sep=0pt, minimum size=6pt, fill=black, label=above:{$\ell(v_0)=x_0y_0\quad$}] (4) at (0,2) {};
    \node[circle, draw, inner sep=0pt, minimum size=6pt, fill=black, label=below:{$\quad\ell(v_1)=x_1y_1$}] (5) at (2,0) {};
    \draw[very thick] (5) -- (4) -- (2) -- (5) -- (3) -- (4) -- (5);
  \end{tikzpicture}
  \]
  \caption{A labeled simplicial complex giving a virtual resolution with homology}
  \label{fig:intro-virtual}
\end{figure}

For any monomial $m\in B$, the associated subcomplex $\Delta_m$ is contractible or empty. In particular, the only induced subcomplex of $\Delta$ with non-trivial homology is the subcomplex containing only the vertices $w_0$ and $w_1$. If $\ell(w_0)\mid m$ and $\ell(w_1)\mid m$, then $m$ is divisible by each of $x_0$, $x_1$, and $x_2$. If $m\in B$, then $m$ is divisible by either $y_0$ or $y_1$; thus for $m\in B$, the complex $\Delta_m$ is either empty or contains at least one of the vertices $v_0$ or $v_1$. Thus by Theorem~\ref{thm:virtual-resolution} the resulting complex $F_\Delta$ is a virtual resolution.

On the other hand, $F_\Delta$ is not a free resolution as $\Delta_{x_0x_1x_2^2}$ is exactly the induced subcomplex on $w_0$ and $w_1$, and is thus not acyclic. Associated to the labeled simplicial complex $(\Delta,\ell)$, we get the following chain complex, denoted $F_\Delta$:
\[
S
  \xleftarrow{%
    \begin{bmatrix}
      x_0y_0 & x_1x_2 & x_0x_2^2 & x_1y_1
    \end{bmatrix}}
S^4
  \xleftarrow{%
    \begin{bmatrix}
      -x_1x_2 & -x_2^2 & -x_1y_1 &  &  \\
      x_0y_0 &  &  & -y_1 &  \\
       & y_0 &  &  & -x_1y_1 \\
       &  & x_0y_0 & x_2 & x_0x_2^2
    \end{bmatrix}}
S^5 
  \xleftarrow{%
    \begin{bmatrix}
      y_1 &  \\
       & x_1y_1 \\
      -x_2 & -x_2^2 \\
      x_0y_0 &  \\
       & y_0
    \end{bmatrix}}%
S^2.
\]

This complex then is a virtual resolution of $S/I$ where $I=\langle  x_0y_0,x_1y_1,x_1x_2,x_0x_2^2\rangle$ is the ideal generated by the vertex labels. Checking homology, we find that $H_1(F_\Delta)$ is non-zero and is in fact generated in the degree corresponding to the monomial $x_0x_1x_2^2$. We check this assertion in more detail in Example~\ref{ex:intro-redo}.
\end{example}

A second key contribution of this paper is the development of a new technique to remove homology from virtual resolutions arising from labeled simplicial complexes.
Relatively uniquely among techniques for virtual resolutions, this technique operates on the resolution via the simplicial complex instead of modifying the module.
This technique begins with a stellar subdivision of a simplicial complex at one of its faces along with an appropriate choice of label on the new vertex.

\begin{definition}
  Fix a labeled simplicial complex $(\Delta,\ell)$ with labels in a polynomial ring $S$, a simplex $\sigma\in \Delta$, and a monomial $m\in S$.
  Define a \defn{subdivision of $(\Delta,\ell)$ at $\sigma$}
  as a pair $(\Delta',\ell')$ where $\Delta'$ is the subdivision of
  $\Delta$ at the simplex $\sigma$ by a single vertex
  and where $\ell'(\tau)=\ell(\tau)$ for all $\tau\in \Delta\cap \Delta'$.
\end{definition}

One way to label the new vertex from this subdivision would be to give it the label $\ell(\sigma)$ of the cell being subdivided.
This labeling doesn't change the resolution in a useful way, as all it serves to do is to make the resolution more non-minimal.
However, with virtual resolutions, we can make a more useful change by making an appropriate choice of monomial label on the new vertex arising from the subdivision.

\begin{theorem}
  \label{thm:subdivision-virtual}
  Fix a smooth toric variety $X$ with Cox ring $S$ and irrelevant ideal $B$.
  Let $(\Delta,\ell)$ be an lcm-labeled simplicial complex such that $F_\Delta$ is a
  virtual resolution of $S/I$ for a monomial ideal $I\subseteq S$.
  Suppose that $\goodsimp\in \Delta$ is a simplex with vertices $v_0,\ldots,v_r$ and
  let $\Delta'$ be the stellar subdivision of $\Delta$ at a point $\newvert$ in $\goodsimp$.
  If $\ell'$ is a lcm-labeling on $\Delta'$ such that
\begin{enumerate}
\item\label{esmall} $\ell'(\newvert)$ divides $\ell(\goodsimp)$, and
\item\label{elarge} $\ell'(\newvert)\in \langle \ell(v_0),\ldots,\ell(v_r)\rangle : B^{\infty}$,
\end{enumerate}
then $F_{\Delta'}$ is a virtual resolution of $S/I$.
\end{theorem}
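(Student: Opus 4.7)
The plan is to verify the hypothesis of Theorem~\ref{thm:virtual-resolution} for $F_{\Delta'}$. Pick $d_0$ with $\redH_i(\Delta_m;\field)=0$ for all $m\in B^{d_0}$ and all $i\geq 0$ (this exists because $F_\Delta$ is a virtual resolution of $S/I$). Condition~(\ref{elarge}) produces $N$ with $B^N\cdot\ell'(\newvert)\subseteq\langle\ell(v_0),\ldots,\ell(v_r)\rangle$, and the Artin--Rees lemma, applied to $B$ and the principal ideal $(\ell'(\newvert))$, produces $c\geq 0$ with $B^d\cap(\ell'(\newvert))\subseteq B^{d-c}(\ell'(\newvert))$ for $d\geq c$. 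Fixing $d\geq\max(d_0,c+N)$ yields the crucial translation: for every monomial $m\in B^d$ with $\ell'(\newvert)\mid m$, at least one $\ell(v_i)$ divides $m$. The remainder of the argument would show $\redH_*(\Delta'_m;\field)=0$ by splitting on $m$ into three cases.

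If $\ell(\goodsimp)\mid m$, condition~(\ref{esmall}) forces $\ell'(\newvert)\mid m$ as well, and a facewise comparison of $\Delta'$ and $\Delta$ identifies $\Delta'_m$ with the stellar subdivision of $\Delta_m$ at $\goodsimp$; this preserves homotopy type, so $\Delta'_m$ is acyclic. If instead $\ell(\goodsimp)\nmid m$ and $\ell'(\newvert)\nmid m$, then $\newvert\notin\Delta'_m$ and $\goodsimp\notin\Delta_m$, from which one reads off $\Delta'_m=\Delta_m$. The substantive case is $\ell(\goodsimp)\nmid m$ and $\ell'(\newvert)\mid m$: by the choice of $d$ above the set $V:=\{v_i:\ell(v_i)\mid m\}$ is nonempty, and since $\ell(\goodsimp)\nmid m$ together with lcm-labeling prevents $V$ from being all of $\{v_0,\ldots,v_r\}$, it is a proper subset.

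In this case I would decompose $\Delta'_m=\Delta_m\cup A$, where $A$ is the closed star of $\newvert$ in $\Delta'_m$ (a cone with apex $\newvert$, hence contractible) meeting $\Delta_m$ exactly in the link $L:=\link_{\Delta'_m}(\newvert)$. Using the lcm-labeling and the assumption $\ell'(\newvert)\mid m$, the link decomposes as the simplicial join
\[
  L\;=\;2^V\,*\,\bigl\{\eta\in\link_\Delta(\goodsimp):\ell(\eta)\mid m\bigr\},
\]
whose first factor is the full simplex on $V$. Since $V$ is nonempty, this factor is contractible, and so $L$ is too. A Mayer--Vietoris argument applied to $\Delta_m\cup A$, with all three pieces $\Delta_m$, $A$, and $L$ acyclic, then yields $\redH_*(\Delta'_m;\field)=0$. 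I expect this case to be the main obstacle: the technical heart is the Artin--Rees step that turns the saturation condition~(\ref{elarge}) into nonemptiness of $V$, which is precisely what keeps $L$ a contractible ball rather than a potentially spherical boundary. That the resulting $F_{\Delta'}$ resolves $S/I$ specifically follows because~(\ref{elarge}) places $\ell'(\newvert)$ in the $B$-saturation of the ideal of vertex labels of $\Delta$, which by hypothesis agrees with the $B$-saturation of $I$.
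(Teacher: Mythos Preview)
Your argument is correct and matches the paper's approach: the same three-way case split on $m$, the same Mayer--Vietoris decomposition of $\Delta'_m$ as $\Delta_m$ union the closed star of $\newvert$, and the same contractibility argument for the intersection via nonemptiness of $V$ (which the paper calls $\tau$). Your explicit invocation of Artin--Rees and your join description $L=2^V\star(\link_\Delta(\goodsimp))_m$ sharpen what the paper records more loosely as ``$d\gg e$'' and ``every maximal simplex of $\link_{\newvert}(\Delta'_m)$ contains $\tau$'', but the strategy is identical.
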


We will call a subdivision satisfying the conditions of Theorem~\ref{thm:subdivision-virtual} a \defn{virtual compatible subdivision}. Aside from simply giving a new and unique way to construct new resolutions from old resolutions, this technique often reduces the homology in the virtual resolution. The earlier example admits such a subdivision that is again illustrative.

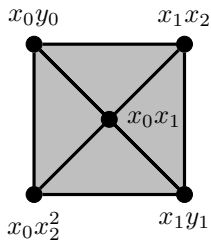
\begin{figure}[h!]
  \[
  \begin{tikzpicture}
    \fill[lightgray] (0,0) -- (2,0) -- (0,2) -- cycle;
    \fill[lightgray] (2,2) -- (2,0) -- (0,2) -- cycle;
    \node[circle, draw, inner sep=0pt, minimum size=6pt, fill=black, label=below:$x_0x_2^2$] (2) at (0,0) {};
    \node[circle, draw, inner sep=0pt, minimum size=6pt, fill=black, label=above:$x_1x_2$] (3) at (2,2) {};
    \node[circle, draw, inner sep=0pt, minimum size=6pt, fill=black, label=above:$x_0y_0$] (4) at (0,2) {};
    \node[circle, draw, inner sep=0pt, minimum size=6pt, fill=black, label=below:$x_1y_1$] (5) at (2,0) {};
    \node[circle, draw, inner sep=0pt, minimum size=6pt, fill=black, label=right:$x_0x_1$] (6) at (1,1) {};
    \draw[very thick] (2) -- (3);
    \draw[very thick] (5) -- (4) -- (2) -- (5) -- (3) -- (4) -- (5); 
  \end{tikzpicture}
  \]
  \caption{The labeled simplicial complex obtained by a virtual compatible subdivision of the labeled simplicial complex from Figure~\ref{fig:intro-virtual}}
  \label{fig:intro-subdivision}
\end{figure}

\begin{example}
\label{ex:intro-subdivision}
Consider again the labeled complex $(\Delta, \ell)$ from Example~\ref{ex:intro-virtual},  and subdivide it at a point on the edge between $v_0$ and $v_1$ to produce the subdivision $(\Delta',\ell')$ depicted in Figure~\ref{fig:intro-subdivision}:

Observe that this is a virtual compatible subdivision. The label of the new vertex $\ell'(\newvert)=x_0x_1$ divides $\lcm(\ell(v_0),\ell(v_1))=x_0x_1y_0y_1$, the label on the edge between $v_0$ and $v_1$. Also, it is contained in the ideal $\langle \ell(v_0),\ell(v_1) \rangle : B^\infty$; in fact it is the generator of this ideal $\langle x_0x_1 \rangle$.

To each of these labeled simplicial complexes we again associate a chain complex $F_{\Delta'}$ which this time has the form
\[
S
  \xleftarrow{{ %
    \begin{bsmallmatrix}
      x_0y_0 \\ x_1x_2 \\ x_0x_2^2 \\ x_1y_1 \\ x_0x_1
    \end{bsmallmatrix}^{\!\!\top}}}
S^5
  \xleftarrow{{
    \begin{bsmallmatrix}
      -x_1x_2 & -x_2^2 & -x_1 &  &  &  &  &  \\
      x_0y_0 &  &  & -y_1 & -x_0 &  &  &  \\
       & y_0 &  &  &  & -x_1y_1 & -x_1 &  \\
       &  &  & x_2 &  & x_0x_2^2 &  & -x_0 \\
       &  & y_0 &  & x_2 &  & x_2^2 & y_1
    \end{bsmallmatrix}}}
S^8
  \xleftarrow{ {%
    \begin{bsmallmatrix}
      1 &  &  &  \\
       & x_1 &  &  \\
      -x_2 & -x_2^2 &  &  \\
       &  & x_0 &  \\
      y_0 &  & -y_1 &  \\
       &  &  & 1 \\
       & y_0 &  & -y_1 \\
       &  &  x_2 & x_2^2
    \end{bsmallmatrix}}}%
S^4.
\]  
Checking homology, one finds that $F_{\Delta'}$ is in fact a free resolution. In particular it is a free resolution of $S/J$ where $J=\langle x_0x_1,x_0y_0,x_1x_2,x_1y_1,x_0x_2^2 \rangle$. A quick computation verifies that the sheaves $(S/I)^\sim$ and $(S/J)^\sim$ are isomorphic, and so per \cite{virtual-res}*{Definition~1.1}, $F_{\Delta'}$ is a virtual resolution of $S/I$. 
\end{example}

Our main result describes when such a subdivision procedure can reduce the homology in a specified degree. 

\begin{theorem}
  \label{thm:main-theorem}
  Fix a smooth toric variety $X$ with Cox ring $S$.
  Let $(\Delta,\ell)$ be an lcm-labeled simplicial complex such that $F_\Delta$ is a virtual resolution, 
  and $(\Delta',\ell')$ be an lcm-labeled virtual compatible subdivision of $(\Delta,\ell)$ at a point $\newvert$ in a simplex $\goodsimp\in \Delta$.
  Write $\Gamma:=\link_{\omega}(\Delta)$.

  If for every monomial $m\in S$ where $\ell'(\newvert)$ divides $m$ and $\ell(v)$ does not divide $m$ for any $v\in \omega$, the induced map on homology $\redH_{j}(\Gamma_m;\field)\rightarrow \redH_{j}(\Delta_m;\field)$ is an injection for all $j\geq -1$,
  then for all $i\geq  0$ and $\alpha\in \ZZ^n$
  \[\dim H_{i}(F_{\Delta'})_\alpha\leq \dim H_{i}(F_\Delta)_{\alpha}.\]
  Moreover, the inequality is strict for all  $i\geq 0$ and $\alpha\in \ZZ^n$ where $\ell'(\newvert)$ divides $m:=x^\alpha$ and $\redH_{i-1}(\Gamma_m;\field)\neq 0$.
\end{theorem}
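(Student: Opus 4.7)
The plan is to translate the inequality of multigraded Betti numbers into a comparison of reduced simplicial homology. Specifically, the standard identification $H_i(F_\Delta)_\alpha \cong \redH_{i-1}(\Delta_m;\field)$ (for $m := x^\alpha$), together with its analog for $F_{\Delta'}$, reduces the entire theorem to comparing $\redH_j(\Delta'_m;\field)$ with $\redH_j(\Delta_m;\field)$ in each multidegree. The bulk of the work is then a topological analysis of how the subdivision at $\omega$ transforms the relevant subcomplexes.

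To organize this, I would stratify monomials $m$ by the set $S := \{v \in \omega : \ell(v) \mid m\}$ and by whether $\ell'(\newvert) \mid m$. Condition~\ref{esmall} of the subdivision immediately rules out one case: if $S = \omega$ then $\ell(\omega) \mid m$ and hence $\ell'(\newvert) \mid \ell(\omega) \mid m$, so assuming $\ell'(\newvert) \nmid m$ forces $S \subsetneq \omega$. In the three remaining ``non-reducing'' cases I expect $\redH_*(\Delta'_m) \cong \redH_*(\Delta_m)$. When $\ell'(\newvert) \nmid m$ (so $S \subsetneq \omega$), no simplex of $\Delta_m$ contains $\omega$, so the subdivision has no effect and $\Delta'_m = \Delta_m$ outright. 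When $\ell'(\newvert) \mid m$, both $\Delta_m$ and $\Delta'_m$ decompose as a contractible closed star (of $\omega$ or of $\newvert$) glued to the common subcomplex of $\omega$-avoiding simplices of $\Delta_m$ along the common intersection $(\partial\omega)_m * \Gamma_m$. If $S = \omega$, this intersection is literally identical for both decompositions, so the two Mayer-Vietoris long exact sequences coincide and compute the same homology. If $\emptyset \neq S \subsetneq \omega$, then $(\partial\omega)_m$ is a non-empty simplex on $S$, so the join $(\partial\omega)_m * \Gamma_m$ is itself a cone and therefore contractible, and the long exact sequence again forces the homologies to agree.

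The main case, which is where the hypothesis is actually invoked, is $\ell'(\newvert) \mid m$ with $S = \emptyset$. Here $(\partial\omega)_m$ has no vertices, so $(\partial\omega)_m * \Gamma_m = \Gamma_m$, and one has $\Delta'_m = (\newvert * \Gamma_m) \cup \Delta_m$ with intersection $\Gamma_m$ and with $\newvert * \Gamma_m$ contractible. The Mayer-Vietoris long exact sequence takes the form
\[\cdots \to \redH_j(\Gamma_m) \xrightarrow{\iota_j} \redH_j(\Delta_m) \to \redH_j(\Delta'_m) \to \redH_{j-1}(\Gamma_m) \xrightarrow{\iota_{j-1}} \redH_{j-1}(\Delta_m) \to \cdots,\]
where each $\iota_j$ is induced by the natural inclusion $\Gamma_m \hookrightarrow \Delta_m$ of the label-restricted link. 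The hypothesis supplies injectivity of every $\iota_j$, so the connecting homomorphism $\redH_j(\Delta'_m) \to \redH_{j-1}(\Gamma_m)$ has image in $\ker\iota_{j-1} = 0$ and must vanish. The long exact sequence therefore splits into short exact sequences
\[0 \to \redH_j(\Gamma_m) \to \redH_j(\Delta_m) \to \redH_j(\Delta'_m) \to 0,\]
yielding $\dim \redH_j(\Delta'_m) = \dim \redH_j(\Delta_m) - \dim \redH_j(\Gamma_m)$. Combined with the equalities established in the other cases, this proves the global inequality, with strict inequality arising in this case precisely when $\redH_{i-1}(\Gamma_m) \neq 0$.

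The principal obstacle is the case-analysis bookkeeping: correctly identifying $\link_{\Delta'_m}(\newvert)$ as $(\partial\omega)_m * \Gamma_m$, tracking which simplices of $\Delta$ containing $\omega$ are removed and which new simplices over $\newvert$ appear in $\Delta'_m$, and verifying the contractibility claims for $(\partial\omega)_m * \Gamma_m$ in each subcase. Once these identifications are secure, the Mayer-Vietoris splitting in the main case follows cleanly from the injectivity hypothesis and the rest is a dimension count.
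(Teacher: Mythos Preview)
Your proposal is correct and follows essentially the same route as the paper: the core step is the Mayer--Vietoris decomposition $\Delta'_m = \Delta_m \cup \cpxstar_{\newvert}(\Delta'_m)$ with intersection $(\partial\omega)_m * \Gamma_m$, together with the case split on which vertices of $\omega$ lie in $\Delta_m$. The paper organizes this as two separate propositions---one showing $(\iota_\omega)_*$ is surjective under the injectivity hypothesis, one showing classes supported on $\Gamma$ lie in its kernel---whereas you read off the short exact sequence $0 \to \redH_j(\Gamma_m) \to \redH_j(\Delta_m) \to \redH_j(\Delta'_m) \to 0$ directly and get the dimension formula in one stroke; this is a clean packaging of the same Mayer--Vietoris computation.
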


In summary, subject to some divisibility and topological criteria, the subdivision of $\Delta$ at a point in a simplex $\goodsimp$ yields a virtual resolution with less homology.
As an application, using these tools we can prove the following statement about a specific class of simplicial virtual resolutions.

\begin{theorem}
  \label{thm:star-trivial}
  Let $X=\mathbb{P}^n\times\mathbb{P}^k$ with $k\leq n$ and let $S$ be the Cox ring of $X$.
  Let $(\Delta,\ell)$ be an lcm-labeled simplicial complex with $\Delta$ the $(k+1)$-dimensional bipyramid with exactly two maximal simplices.
  If $F_{\Delta}$ is a virtual resolution of $S/I$ for a monomial ideal $I\subseteq S$, then there exists a labeled simplicial complex $(\Delta',\ell')$ such that
  $F_{\Delta'}$ is a virtual resolution of $S/I$ where $H_i(F_{\Delta'})=0$ for all $i>0$.
\end{theorem}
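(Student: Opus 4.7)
The approach is to produce $(\Delta', \ell')$ as a single virtual compatible stellar subdivision of $(\Delta, \ell)$ at the equatorial simplex, and then invoke Theorem~\ref{thm:main-theorem}.

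\textit{Topology.}  Write $\omega := \{v_0, \ldots, v_k\}$ for the $k$-simplex forming the equator of the bipyramid, and $w_0, w_1$ for the two apex vertices, so the maximal simplices of $\Delta$ are $\omega \cup \{w_0\}$ and $\omega \cup \{w_1\}$.  Each equator vertex $v_i$ lies in every maximal simplex, so any induced subcomplex $\Delta_m$ containing some $v_i$ is a cone with apex $v_i$ and hence contractible.  If $\Delta_m$ contains no equator vertex, it is the induced subcomplex on a subset of $\{w_0, w_1\}$ with no edges, since $\{w_0, w_1\}$ is not a face of $\Delta$.  Thus the only $\Delta_m$ with $\redH_\ast(\Delta_m; \field) \neq 0$ is the two-point complex $\{w_0, w_1\}$, which occurs precisely when $m$ is a multiple of $L := \lcm(\ell(w_0), \ell(w_1))$ divisible by no $\ell(v_i)$; for every such $m$, $H_1(F_\Delta)$ is one-dimensional in the corresponding multidegree, and these are the only nonvanishing higher Betti numbers of $F_\Delta$.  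Applying Theorem~\ref{thm:virtual-resolution} then forces $L \in J := \langle \ell(v_0), \ldots, \ell(v_k) \rangle : B^\infty$.

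\textit{The construction.}  Stellarly subdivide $\Delta$ at $\omega$ to obtain $\Delta'$, introducing a new vertex $\newvert$ with monomial label $\mu := \ell'(\newvert)$.  We choose $\mu$ so that (a)~$\mu \in J$, (b)~$\mu \mid \ell(\omega)$, and (c)~$\mu \mid L$.  Conditions (a) and (b) are exactly the hypotheses of Theorem~\ref{thm:subdivision-virtual}, ensuring $F_{\Delta'}$ remains a virtual resolution of $S/I$, while (c) guarantees $\newvert \in \Delta'_m$ for every monomial $m$ contributing problematic homology.  Producing such a $\mu$---a monomial in $J$ dividing $\gcd(L, \ell(\omega))$---is the main technical obstacle.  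The hypothesis $k \leq n$ enters here: since $B = \langle x_0, \ldots, x_n \rangle \cap \langle y_0, \ldots, y_k \rangle$, the saturation $J$ is obtained from $\langle \ell(v_i)\rangle$ by deleting the primary components whose radical contains $\mathfrak{m}_x$ or $\mathfrak{m}_y$, and having $n+1 \geq k+1$ ambient $x$-variables provides enough flexibility to construct $\mu$ explicitly---for instance, by extracting a divisor of $\gcd(L, \ell(\omega))$ supported only in $x$-variables and verifying its membership in $J$ via a pigeonhole argument on the $y$-degrees of monomials in $B^d$.

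\textit{Verification.}  The link $\Gamma := \link_\omega(\Delta)$ is the $0$-dimensional complex on the two disconnected vertices $w_0, w_1$.  For any $m$ with $\mu \mid m$ and $\ell(v_i) \nmid m$ for every $i$, both $\Gamma_m$ and $\Delta_m$ equal the induced subcomplex on those apex vertices whose labels divide $m$, so $\Gamma_m \hookrightarrow \Delta_m$ is the identity map and induces an isomorphism---in particular an injection---on $\redH_\ast$ in every degree.  Theorem~\ref{thm:main-theorem} therefore gives $\dim H_i(F_{\Delta'})_\alpha \leq \dim H_i(F_\Delta)_\alpha$ for all $i$ and $\alpha$, strict whenever $\mu \mid x^\alpha$ and $\redH_{i-1}(\Gamma_{x^\alpha}; \field) \neq 0$.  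Any $(i, \alpha)$ with $H_i(F_\Delta)_\alpha \neq 0$ and $i > 0$ has $i = 1$ and $m := x^\alpha$ problematic; for such $m$, condition~(c) gives $\mu \mid L \mid m$, and $\Gamma_m = \{w_0, w_1\}$ has $\redH_0 \neq 0$, so the strict inequality applies and forces $H_1(F_{\Delta'})_\alpha = 0$.  Hence $H_i(F_{\Delta'}) = 0$ for all $i > 0$, as required.
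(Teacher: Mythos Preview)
Your overall architecture matches the paper's: subdivide $\Delta$ at the equatorial simplex $\omega$, label the new vertex by a monomial $\mu$ satisfying your conditions (a), (b), (c), and then verify via Theorem~\ref{thm:main-theorem}. Your Topology and Verification paragraphs are correct and essentially reproduce the content of Proposition~\ref{prop:no-higher-homology}.

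The genuine gap is in your Construction paragraph: you assert that a monomial $\mu$ satisfying (a)--(c) exists but never produce one, and the approach you sketch---taking a divisor of $\gcd(L,\ell(\omega))$ supported only in the $x$-variables and then running a pigeonhole on $y$-degrees---does not work. For a concrete failure on $\mathbb{P}^n\times\mathbb{P}^1$, take $\ell(w_0)=y_0y_1$, $\ell(w_1)=x_0$, $\ell(v_0)=y_0^2y_1$, $\ell(v_1)=y_0y_1^2$. One checks directly (or via Proposition~\ref{prop:join-vres}) that $F_\Delta$ is a virtual resolution. Here $L=x_0y_0y_1$ and $\ell(\omega)=y_0^2y_1^2$, so $\gcd(L,\ell(\omega))=y_0y_1$ has trivial $x$-part; but $1\notin J$, since $x_0^dy_0^d\in B^d$ lies in $\langle y_0^2y_1,\,y_0y_1^2\rangle$ for no $d$. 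Your pigeonhole on $y$-degrees can only show that some $y_j^{p_j}$ divides a given monomial of $B^d$ for $d$ large; to conclude $\ell(v_j)\mid \mu b$ you still need the cofactor $\ell(v_j)/y_j^{p_j}$ to divide $\mu$, and that cofactor need not be supported in the $x$-variables.

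The paper closes exactly this gap by first invoking Proposition~\ref{prop:join-vres}, whose proof uses the hypothesis $k\le n$ in an essential counting argument to show that (after permuting variables) each $\ell(v_j)$ factors as $y_j^{p_j}m_j$ with $p_j>0$ and $m_j\mid L$. The correct label is then $\mu=\lcm(m_0,\ldots,m_k)$: condition~(c) holds since each $m_j\mid L$; condition~(b) since each $m_j\mid\ell(v_j)\mid\ell(\omega)$; and condition~(a) follows because $B\subseteq\langle y_0,\ldots,y_k\rangle$ forces some $y_j^{p_j}$ to divide any monomial of $B^d$ for $d$ large, while $m_j\mid\mu$ supplies the remaining factor of $\ell(v_j)$. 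Without this structural input there is no evident way to manufacture $\mu$, and this is where $k\le n$ genuinely enters---not through ``flexibility in the $x$-variables'' as you suggest.
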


\section{Virtual Resolutions from Simplicial Complexes}
\label{sec:virtual-resolutions}

We will work exclusively with simplicial complexes throughout this paper, but the key results of this section can be generalized to cell complexes without significant difficulty. With more care many of the other results in this paper can be generalized. Throughout this paper, $\field $ denotes a field and all polynomial rings are over $\field $. As we are working with monomial ideals, we will use the fine grading on all polynomial rings unless otherwise mentioned. In particular, the polynomial ring $S=\field[x_1,\ldots,x_n]$ has the $\ZZ^n$-grading given by setting the degree of each variable to the corresponding standard basis element. Moreover, for an element $\alpha$ of $\NN^n$, we denote the corresponding monomial by $x^{\alpha}$.

We begin by standardizing some notation about simplicial complexes. Recall that a simplicial complex $\Delta$ on a finite set of vertices $V$ is a subset of $2^V$ such that if $\sigma\in \Delta$ and $\tau\subseteq \sigma$ then $\tau\in \Delta$. In particular, every non-empty simplicial complex contains the empty simplex and we will consider the empty simplex to be of dimension $-1$. As an abuse of notation, we will sometimes consider a single simplex $\sigma\subseteq V$ as the simplicial complex $2^\sigma$. 

We define several standard operations for simplicial complexes.
For two simplicial complexes $\Delta$ and $\Gamma$ on disjoint vertex sets, we denote the \defn{join} of $\Delta$ and $\Gamma$ by
\[\Delta\star \Gamma := \{\sigma \sqcup \tau : \sigma\in \Delta, \tau\in \Gamma\}.\]

For a simplex $\tau$ in $\Delta$, denote the closed \defn{star} of $\tau$ in $\Delta$ by
\[\cpxstar_{\tau}(\Delta) := \{\sigma \in \Delta : \tau\cup \sigma\in \Delta\}.\]
Similarly, denote the \defn{link} of a simplex $\tau$ in $\Delta$ by
\[\link_{\tau}(\Delta) := \{\sigma \in \Delta : \tau\cap \sigma = \emptyset, \tau\cup \sigma \in \Delta\}.\]
We also observe here that the link is the maximal subcomplex of $\Delta$ satisfying $\link_{\sigma}(\Delta) \star \sigma \subseteq \Delta$.

Throughout this paper, a subdivision of a simplicial complex will refer to a stellar subdivision at a single point in a simplex.
That is, for a simplex $\goodsimp\in \Delta$, we define the \defn{subdivision of $\Delta$ at a point $\newvert$ in $\goodsimp$} as the simplicial complex $\Delta'$ satisfying
\[\Delta' = \left(\Delta \setminus \{\sigma \in \Delta : \goodsimp\subseteq \sigma\}\right) \cup \{\sigma \cup \{\newvert\} : \sigma\cup \goodsimp \in \Delta, \goodsimp\not\subseteq\sigma \}.\]

We also define here a simplicial complex we call the \emph{bipyramid over a $k$-simplex}, which we denote $\bp^k$, as the join of a pair of vertices and a $k$-simplex. While there are other triangulations of a bipyramid, this is the one of relevance for this paper. 

\subsection*{Labeled Simplicial Complexes}

We define the following notion of a labeled simplicial complex, which is a slightly modified version of the construction that \cite{bps-monomial-resolutions} describes.
\begin{definition}[\cite{bps-monomial-resolutions}*{Construction~2.1}]
  \label{def:labeled-simplicial-complex}
Fix a polynomial ring $S$. A \defn{labeled simplicial complex} is a pair $(\Delta,\ell)$, where $\Delta$ is a simplicial complex, and $\ell:\Delta\to S$ is a \defn{labeling} satisfying
\begin{enumerate}
\item $\ell(\sigma)$ is a monomial for each simplex $\sigma\in\Delta$, and
\item $\ell(\sigma)\mid \ell(\gamma)$ for all inclusions $\sigma\subset \gamma$.
\end{enumerate}
Moreover, we will say a labeling is an \emph{lcm-labeling} if $\ell(\emptyset)=1$ and for each non-empty simplex $\sigma$,
  \[\ell(\sigma)=\lcm\{\ell(\tau) : \tau\subsetneq \sigma\}.\]
\end{definition}

Where it is unambiguous, we will refer to the labeled simplicial complex by $\Delta$, and omit mention of $\ell$.

\begin{definition}
  \label{def:labeled-simplicial-chain-complex}
  Given a labeled simplicial complex $(\Delta,\ell)$, we can associate a free complex $F_\Delta$ where for $i\geq 0$,
  \[ F_{\Delta,i} = \bigoplus_{\sigma\in \Delta,~ \dim \sigma = i-1} S(-\deg(\ell(\sigma))).\]
  The differentials for this complex $\partial_i: F_{\Delta,i}\to F_{\Delta, i-1}$ are given by
  \[ \partial_i [\sigma] = \sum_{\tau\subseteq\sigma,~ \dim \tau = \dim \sigma - 1} \pm\frac{\ell(\sigma)}{\ell(\tau)}[\tau],\]
  where the sign is chosen based on the relative orientation of $\tau$ and $\sigma$.
\end{definition}

Condition (2) of Definition~\ref{def:labeled-simplicial-complex} is sufficient to imply that the collection of modules and maps in Definition~\ref{def:labeled-simplicial-chain-complex} are well-defined. A computation similar to that for simplicial homology shows that $F_\Delta$ always forms a complex of $S$-modules.

So as long as $\ell(\emptyset)=1$, the first term in the complex $F_\Delta$ is given by $F_{\Delta,0}=S$.
Consequently, for such complexes, $H_0(F_\Delta)=S/I$ where $I$ is the ideal generated by the labels on all vertices of $\Delta$.
More generally, the homology of the complex $F_\Delta$ can be determined using the homology of the following subcomplexes of $\Delta$ associated to a monomial $m$:
\[\Delta_m := \{\sigma\in \Delta : \ell(\sigma) \text{ divides } m\}.\]
The key connection behind the theory of simplicial resolutions is as follows.

\begin{theorem}[\cite{bps-monomial-resolutions}]
  \label{thm:cellular-subcomplexes}
  Fix a polynomial ring $S$. Let $(\Delta,\ell)$ be a labeled simplicial complex with labels in $S$, and let $F_{\Delta}$ be the free complex associated to this labeling. For any integer $i\geq 0$ and any monomial $m=x^{\alpha}\in S$,
  \[H_{i}(F_{\Delta})_{\alpha} \cong \widetilde{H}_{i-1}(\Delta_m;\field).\]
\end{theorem}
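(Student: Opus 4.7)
The plan is to exploit the fine $\ZZ^n$-grading on $F_\Delta$ to reduce the statement to a purely simplicial computation one multidegree at a time. Since $F_\Delta$ is a complex of $\ZZ^n$-graded free $S$-modules, each graded piece $(F_\Delta)_\alpha$ is itself a complex of $\field$-vector spaces whose homology is $H_i(F_\Delta)_\alpha$. So the theorem reduces to identifying the complex $(F_\Delta)_\alpha$ with the shifted reduced simplicial chain complex of $\Delta_m$.

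First, I would describe the $\alpha$-graded piece of $F_{\Delta,i}$ explicitly. The summand $S(-\deg\ell(\sigma))$ has a one-dimensional $\field$-vector space in multidegree $\alpha$ spanned by $x^{\alpha-\deg\ell(\sigma)}[\sigma]$ exactly when $\ell(\sigma)\mid m$, i.e. when $\sigma\in\Delta_m$; otherwise it contributes $0$. Thus $(F_{\Delta,i})_\alpha$ has $\field$-basis indexed by the $(i-1)$-dimensional simplices of $\Delta_m$, including for $i=0$ the empty simplex (since $\ell(\emptyset)=1$ in the case of interest divides $m$, giving the copy of $\field$ that serves as augmentation).

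Next, I would check that, under this identification, the differential $\partial_i$ of $F_\Delta$ restricted to degree $\alpha$ agrees with the usual simplicial boundary $\widetilde\partial_{i-1}$ of $\Delta_m$. The map sends the basis element $\frac{m}{\ell(\sigma)}[\sigma]$ to $\sum_{\tau\subsetneq\sigma,\,\dim\tau=\dim\sigma-1}\pm \frac{m}{\ell(\sigma)}\cdot\frac{\ell(\sigma)}{\ell(\tau)}[\tau]=\sum_\tau\pm\frac{m}{\ell(\tau)}[\tau]$, which is exactly the basis expansion of the reduced simplicial boundary of $\sigma\in\Delta_m$, with the same signs (since the signs in Definition~\ref{def:labeled-simplicial-chain-complex} come from the relative orientation of $\tau$ in $\sigma$, which is the standard simplicial sign convention).

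Combining these two identifications, $(F_\Delta)_\alpha$ is isomorphic as a complex of $\field$-vector spaces to the reduced simplicial chain complex $\widetilde{C}_{\bullet-1}(\Delta_m;\field)$, shifted up by one in homological degree. Taking homology gives $H_i(F_\Delta)_\alpha\cong \widetilde{H}_{i-1}(\Delta_m;\field)$, as desired. The only subtle point is bookkeeping: verifying that the shift by one lines up correctly (the empty simplex in $\Delta_m$ provides the augmentation making the homology reduced) and that the sign convention matches — these are routine but must be stated carefully to avoid an off-by-one or sign confusion.
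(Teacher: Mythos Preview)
Your proposal is correct and follows essentially the same approach as the paper: both identify the degree-$\alpha$ strand of $F_\Delta$ with the (shifted, reduced) simplicial chain complex of $\Delta_m$ and read off the homology. Your version is simply more explicit about the basis identification, the differential computation, and the index shift than the paper's one-line summary.
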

\begin{proof}
  This follows the technique of the proof of \cite{bps-monomial-resolutions}*{Lemma~2.2}. Namley, we can observe that the degree $\alpha$ strand of the complex $F_{\Delta}$ is exactly the cellular chain complex with coefficients in $\field $ for $\Delta_m$. As such we get the equality $H_{i}(F_{\Delta})_{\alpha}=H_i(\Delta_m;\field)$.
\end{proof}

Recall that a free complex $F_\bullet$ supported in non-negative homological degrees is a \defn{free resolution} if $H_i(F_\bullet)=0$ for all $i>0$; in this case we say it is a free resolution \defn{of} $H_0(F_\bullet)$. Thus, in the standard theory of cellular resolutions, this allows us to characterize the acyclicity of the resolution as the acyclicity of the subcomplexes $\Delta_m$. 

\subsection*{Virtual Resolutions}

We recall some background on toric varieties; for reference, we refer the reader to the book \cite{toric-varieties}. For a normal toric variety $X$, we can associate a $\Cl(X)$-graded polynomial ring $S$ called the Cox ring, along with a square-free monomial ideal $B\subseteq S$. To any finitely generated $\Cl(X)$-graded $S$-module $M$ there is a unique associated sheaf $M^\sim$ on $X$;
this correspondence is in general many-to-one.

Critically, for a smooth toric variety $X$, the finitely generated modules that give the zero sheaf are exactly the $B$-torsion modules. That is to say, $M^\sim = 0$ if and only if there exists $d\geq 0$ such that $B^d\cdot M = 0$. As the irrelevant ideal $B$ is always a monomial ideal, this implies that if $S=\field[x_1,\ldots,x_n]$ and $M$ is $\ZZ^n$-graded then it suffices to check that $M_\alpha=0$ for all monomials $x^{\alpha}\in B^d$ for some $d\geq 0$.

Now with this context, we can recall the definition of a virtual resolution from \cite{virtual-res}.
\begin{definition}[\cite{virtual-res}*{Definition~1.1}]
    Fix a smooth toric variety $X$ with Cox ring $S$.
  Let $M$ be a $\Cl(X)$-graded $S$-module. A \defn{virtual resolution} of $M$ is a $\Cl(X)$-graded complex of free $S$-modules $F_\bullet$ such that the corresponding complex of vector bundles $F_\bullet^{\sim}$ is a locally-free resolution of $M^\sim$.
\end{definition}

\begin{remark}
  \label{rem:vres-saturation}
  To check if a virtual resolution $F_\bullet$ is a virtual resolution of a module $M$, we must in general show that $H_0(F_\bullet)^\sim = M^\sim$. However, for this paper, there is one important case, namely where $M$ and $H_0(F_\bullet)$ are both presented as quotients of $S$ by a homogeneous ideal. Then over a smooth toric variety $X$, by \cite{toric-varieties}*{Proposition~6.A.7},  $(S/I)^\sim=(S/J)^\sim$ if and only if $I:B^\infty=J:B^\infty$ where $B$ is the irrelevant ideal for $X$. As all of our complexes in this paper arise as $F_\Delta$ for an lcm-labeled simplicial complex $\Delta$, the module $H_0(F_\Delta)$ can always be presented as a quotient of $S$ by a homogeneous ideal.
\end{remark}

As with free resolutions, whether the complex $F_\Delta$ associated to the labeled simplicial complex $(\Delta,\ell)$ is a virtual resolution can be characterized by an acyclicity condition on subcomplexes. We recall here the statement of Theorem~\ref{thm:virtual-resolution} for symmetry with Theorem~\ref{thm:cellular-subcomplexes}.

\begin{theorem*}[Theorem~\ref{thm:virtual-resolution}]
  Fix a smooth toric variety $X$ with Cox ring $S=\field[x_1,\ldots,x_n]$ and irrelevant ideal $B$. Let $(\Delta,\ell)$ be a labeled simplicial complex.
  The complex $F_\Delta$ is a virtual resolution if and only if
  there exists some $d\geq 0$ such that $\redH_j(\Delta_m;\field)=0$ for all $m\in B^{d}$ for $j\geq 0$.
\end{theorem*}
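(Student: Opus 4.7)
The plan is to combine Theorem~\ref{thm:cellular-subcomplexes}, which identifies $H_i(F_\Delta)_\alpha \cong \widetilde{H}_{i-1}(\Delta_{x^\alpha};\field)$, with the characterization of virtual resolutions recorded in Remark~\ref{rem:vres-saturation}: on a smooth toric variety $F_\Delta$ is a virtual resolution iff each $H_i(F_\Delta)$ for $i\geq 1$ sheafifies to zero, equivalently each is $B$-torsion. Since $F_\Delta$ is a bounded complex of finitely generated free $S$-modules, there are only finitely many nonzero $H_i(F_\Delta)$, each finitely generated, so ``$B$-torsion for every $i$'' is equivalent to the existence of a single $D$ with $B^D\cdot H_i(F_\Delta)=0$ for all $i\geq 1$.

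For the direction asserting that the stated homology vanishing implies $F_\Delta$ is a virtual resolution, fix $\ZZ^n$-graded generators $\eta_1,\ldots,\eta_s$ of $H_i(F_\Delta)$ in degrees $\alpha_1,\ldots,\alpha_s$. For any $x^\gamma\in B^d$, the product $x^{\alpha_k}x^\gamma$ still lies in $B^d$ because $B^d$ is an ideal. Theorem~\ref{thm:cellular-subcomplexes} together with the hypothesis then forces the degree $\alpha_k+\gamma$ strand of $H_i(F_\Delta)$ to vanish; in particular $x^\gamma\eta_k=0$. Hence $B^d\cdot H_i(F_\Delta)=0$ for every $i\geq 1$, so each $H_i(F_\Delta)^\sim$ vanishes on $X$ and $F_\Delta^\sim$ is a locally-free resolution of $H_0(F_\Delta)^\sim$.

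For the converse, start from a uniform $D$ with $B^D\cdot H_i(F_\Delta)=0$ for $i\geq 1$. The task is to produce $d$ so that $H_i(F_\Delta)_\alpha=0$ whenever $x^\alpha\in B^d$. Every element of $H_i(F_\Delta)_\alpha$ is a $\field$-combination of $x^{\alpha-\alpha_k}\eta_k$ taken over the finitely many generators with $\alpha\geq\alpha_k$ componentwise, so it suffices to verify $x^{\alpha-\alpha_k}\in B^D$ for each such $k$, which in turn reduces to the colon containment $B^d:x^{\alpha_k}\subseteq B^D$. The Artin--Rees lemma, applied to $B$ and each of the finitely many monomials $x^{\alpha_k}$ pooled across all $i$, supplies constants $c_k$ for which $B^d\cap x^{\alpha_k}S=x^{\alpha_k}B^{d-c_k}$ whenever $d\geq c_k$, equivalently $B^d:x^{\alpha_k}\subseteq B^{d-c_k}$. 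Choosing $d:=D+\max_k c_k$ secures the inclusion uniformly, and Theorem~\ref{thm:cellular-subcomplexes} converts the resulting vanishing of $H_i(F_\Delta)_\alpha$ into the claimed vanishing of $\widetilde{H}_{i-1}(\Delta_{x^\alpha};\field)$.

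The main obstacle is this final step: translating ``$B^D$ annihilates $H_i(F_\Delta)$'' into strand-by-strand vanishing for a uniform $B^d$. Being $B$-torsion is a condition on the action of $B$ on the module rather than a statement about which multigraded pieces vanish, and bridging the two requires an Artin--Rees-style growth bound (or an equivalent direct monomial computation of $B^d:x^\mu$) to control how much the colon can enlarge $B^d$.
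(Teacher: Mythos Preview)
Your argument is correct and matches the paper's approach: both directions translate via Theorem~\ref{thm:cellular-subcomplexes} and the characterization of virtual resolutions as those with $B$-torsion higher homology, the key point in the ``vanishing $\Rightarrow$ virtual'' direction being that multiplication by a monomial of $B^d$ lands in multidegrees $\alpha'$ with $x^{\alpha'}\in B^d$. For the converse, the paper asserts the existence of such a $d$ in a single sentence without further justification, whereas you supply the Artin--Rees argument; note that Artin--Rees only yields the containment $B^d\cap x^{\alpha_k}S\subseteq x^{\alpha_k}B^{d-c_k}$ rather than the equality you wrote, but the colon inclusion $B^d:x^{\alpha_k}\subseteq B^{d-c_k}$ you actually use follows from that containment.
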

\begin{proof}[Proof of Theorem~\ref{thm:virtual-resolution}]
  If $F_{\Delta}$ is a virtual resolution, then as $F_{\Delta}$ is $\ZZ^n$-graded,
  there exists $d\geq 0$ such that for each $i>0$, we have $H_i(F_\Delta)_\alpha=0$
  for all $\alpha\in \ZZ^n$ with $x^{\alpha}\in B^d$.
  By Theorem~\ref{thm:cellular-subcomplexes}, for $j=i-1$, we find that $\redH_j(\Delta_m;\field)=0$ for all $m\in B^{d}$.
  
  Now for the reverse direction, fix $d\geq 0$ such that that $\widetilde{H}_j(\Delta_m;\field)=0$ for all $m\in B^{d}$ for $j\geq 0$. We must show that $H_i(F_{\Delta}^{\sim})=0$. Since $H_i(F_{\Delta}^{\sim}) \cong (H_i(F_\Delta))^{\sim}$, so it suffices to show that $B^{d}\cdot H_i(F_\Delta) = 0$.

  Let $m=x^{\alpha}\in B^{d}$ be a monomial, and let $\mu_m : H_i(F_\Delta) \rightarrow H_i(F_\Delta)$ be the map given by multiplication by $m$. Then it suffices to show that $\mu_m$ is zero. As $H_i(F_\Delta)$ is generated in degrees corresponding to the labels on the $i$-dimensional cells of $\Delta$, we find that it is generated in non-negative $\ZZ^n$-degrees. Thus the image of $\mu_m$ is contained in the components of degree at least $\alpha$. Theorem~\ref{thm:cellular-subcomplexes} gives $H_i(F_\Delta)_{\alpha'}=\widetilde{H}_{i-1}(\Delta_{m'};\field)$ for $m'=x^{\alpha'}$. When $m'\in B^{d}$, then equivalently $\alpha'\geq\alpha$, so these vector spaces are $0$. Thus the image of $\mu_m$ is zero, and so $(H_i(F_\Delta))^{\sim}=0$.
\end{proof}

\begin{example}
\label{ex:intro-redo}
We now expand Example~\ref{ex:intro-virtual} with further detail. As before, we fix $X=\mathbb{P}^2\times\mathbb{P}^1$, which has Cox ring $S=\field[x_0,x_1,x_2,y_0,y_1]$ and irrelevant ideal $B=\langle x_0,x_1,x_2\rangle \cap \langle y_0,y_1\rangle$. Consider the following labeled simplicial complex $(\Delta,\ell)$ on vertices $\{v_0,v_1,w_0,w_1\}$:

  \[
  \begin{tikzpicture}
    \fill[lightgray] (0,0) -- (2,0) -- (0,2) -- cycle;
    \fill[lightgray] (2,2) -- (2,0) -- (0,2) -- cycle;

    \node[circle, draw, inner sep=0pt, minimum size=6pt, fill=black, label=below:{$\ell(w_1)=x_0x_2^2\quad$}] (2) at (0,0) {};
    \node[circle, draw, inner sep=0pt, minimum size=6pt, fill=black, label=above:{$\quad\ell(w_0)=x_1x_2$}] (3) at (2,2) {};
    \node[circle, draw, inner sep=0pt, minimum size=6pt, fill=black, label=above:{$\ell(v_0)=x_0y_0\quad$}] (4) at (0,2) {};
    \node[circle, draw, inner sep=0pt, minimum size=6pt, fill=black, label=below:{$\quad\ell(v_1)=x_1y_1$}] (5) at (2,0) {};
    \draw[very thick] (5) -- (4) -- (2) -- (5) -- (3) -- (4) -- (5);
  \end{tikzpicture}
\]

Associated to the simplicial complex above, ignoring the labels, one gets a simplicial chain complex as follows:
\[
\field
  \xleftarrow{%
    \begin{bmatrix}
      1 & 1 & 1 & 1
    \end{bmatrix}}
\field^4
  \xleftarrow{%
    \begin{bmatrix}
      -1 & -1 & -1 &  &  \\
      1 &  &  & -1 &  \\
       & 1 &  &  & -1 \\
      &  & 1 & 1 & 1
    \end{bmatrix}}
\field^5 
  \xleftarrow{%
    \begin{bmatrix}
       1 &  \\
        & 1 \\
       -1 & -1 \\
       1 &  \\
        & 1
    \end{bmatrix}}%
\field^2
\]

Then the construction in Definition~\ref{def:labeled-simplicial-chain-complex} corresponds to replacing $\field$ with appropriate degree-shifted copies of $S$ and replacing the non-zero entries of the differentials by the appropriate monomial. This procedure then yields the following complex of $S$-modules; here again we elide the degree shifts.
\[
S
  \xleftarrow{%
    \begin{bmatrix}
      x_0y_0 & x_1x_2 & x_0x_2^2 & x_1y_1
    \end{bmatrix}}
S^4
  \xleftarrow{%
    \begin{bmatrix}
      -x_1x_2 & -x_2^2 & -x_1y_1 &  &  \\
      x_0y_0 &  &  & -y_1 &  \\
       & y_0 &  &  & -x_1y_1 \\
       &  & x_0y_0 & x_2 & x_0x_2^2
    \end{bmatrix}}
S^5 
  \xleftarrow{%
    \begin{bmatrix}
      y_1 &  \\
       & x_1y_1 \\
      -x_2 & -x_2^2 \\
      x_0y_0 &  \\
       & y_0
    \end{bmatrix}}%
S^2
\]

Aside from $H_0(F_\Delta)$, which is $S/I$ for $I=\langle x_0y_0,x_1y_1,x_1x_2,x_0x_2^2\rangle$, we additionally have $H_1(F_\Delta)$ which is generated by $\eta = -x_0x_2[w_0]+x_1[w_1]$, where $[w_0]$ and $[w_1]$ are the homology classes in $F_\Delta$ corresponding to the vertices $w_0$ and $w_1$. Moreover, $\eta$ is annihilated by $y_0$ and $y_1$ in $H_1(F_\Delta)$ so $H_1(F_\Delta)$ is annihilated by the irrelevant ideal $B=\langle x_0,x_1,x_2\rangle\cap \langle y_0,y_1 \rangle$.

\end{example}

We finish the section with the following two corollaries which will reduce the number of monomials that must be checked to show that $F_\Delta$ is a virtual resolution. These are of occasional convenience throughout the paper. In particular, Corollary~\ref{cor:annihilator-of-homology} is often useful when the homology is generated by a single element.
\begin{cor}
  \label{cor:bracket-powers}
  Fix a smooth toric variety $X$ with Cox ring $S$ and irrelevant ideal $B$. Let $(\Delta,\ell)$ be a labeled simplicial complex.
  The complex $F_\Delta$ is a virtual resolution if and only if there exists some $d\geq 0$ such that
  $\widetilde{H}_i(\Delta_m; \field)=0$ for all $i\geq 0$ and $m\in B^{[d]}=\langle m^d : m\in B\rangle$. 
\end{cor}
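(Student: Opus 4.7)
The plan is to derive this from Theorem~\ref{thm:virtual-resolution} by showing that the two conditions on labeled simplicial complexes are in fact equivalent, via a containment argument between the ideals $B^d$ and $B^{[d]}$ at the level of monomials.

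First, the forward direction is immediate. If $F_\Delta$ is a virtual resolution, Theorem~\ref{thm:virtual-resolution} yields some $d\geq 0$ such that $\widetilde{H}_i(\Delta_m;\field)=0$ for all $m\in B^d$. Since every generator $m^d$ of $B^{[d]}$ (with $m\in B$) lies in $B^d$, we have $B^{[d]}\subseteq B^d$, and the vanishing transfers to all monomials in $B^{[d]}$.

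For the reverse direction, the key observation is that a sufficiently high ordinary power $B^N$ is contained in the bracket power $B^{[d]}$. Let $g_1,\ldots,g_s$ denote the minimal monomial generators of the square-free monomial ideal $B$, and set $N=s(d-1)+1$. Any monomial in $B^N$ is divisible by some product $g_{i_1}g_{i_2}\cdots g_{i_N}$, and by the pigeonhole principle some generator $g_j$ appears at least $d$ times among the $i_k$. Hence every monomial of $B^N$ is divisible by $g_j^d$ for some $j$, which places it in $B^{[d]}$. By hypothesis, $\widetilde{H}_i(\Delta_m;\field)=0$ for every such $m$, and another application of Theorem~\ref{thm:virtual-resolution} (with the integer $N$ in place of $d$) concludes that $F_\Delta$ is a virtual resolution.

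There is no serious obstacle here; the only subtlety is to verify that a monomial in $B^{[d]}$ is exactly a monomial divisible by $g_j^d$ for some minimal generator $g_j$ of $B$, which follows because $B$ is a monomial ideal so taking the monomial part of the generating set $\{m^d : m\in B\}$ reduces to powers of the minimal generators. After that, the pigeonhole step is the only content, and it is standard.
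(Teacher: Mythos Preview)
Your proof is correct and follows essentially the same approach as the paper: both directions are deduced from Theorem~\ref{thm:virtual-resolution} via the containments $B^{[d]}\subseteq B^d$ and $B^N\subseteq B^{[d]}$ for $N\gg d$. The paper simply asserts these two containments without justification, whereas you supply the pigeonhole argument for the second one.
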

\begin{proof}
  This follows from Theorem~\ref{thm:virtual-resolution} along with the two facts: $B^{[d]}\subseteq B^{d}$, and $B^{D}\subseteq B^{[d]}$ for $D\gg d$.
\end{proof}


\begin{cor}
  \label{cor:annihilator-of-homology}
  Fix a smooth toric variety $X$ with Cox ring $S$ and irrelevant ideal $B$. Let $(\Delta,\ell)$ be a labeled simplicial complex. The complex $F_{\Delta}$ is a virtual resolution if and only if for every monomial $m\in S$ and $i\geq 0$ with $\redH_i(\Delta_m;\field)\neq 0$, there exists $d\geq 0$ such that $\widetilde{H}_i(\Delta_{b^dm};\field)=0$ for every minimial generator $b\in B$.
\end{cor}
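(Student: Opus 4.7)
The forward direction is immediate: invoking Theorem~\ref{thm:virtual-resolution} gives a single $d$ with $\widetilde H_j(\Delta_{m'};\field)=0$ for all $m'\in B^d$ and $j\geq 0$, and since $b^d m\in B^d$ for any monomial $m$ and any minimal generator $b$, this $d$ uniformly witnesses the condition.

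For the reverse direction, my plan is to apply Corollary~\ref{cor:bracket-powers}, which reduces the problem to finding a uniform $D$ such that $\widetilde H_i(\Delta_m;\field)=0$ for all $i\geq 0$ and all $m\in B^{[D]}$. The first step would be a uniform-stabilization observation: for each minimal generator $b$ of $B$ and each monomial $m$, the sequence of subcomplexes $\Delta_{b^d m}$ is nondecreasing in $d$ and so stabilizes; moreover, comparing the exponents of the $b$-variables in $b^d m$ against those in the finitely many labels $\ell(\sigma)$ shows that the stabilization happens uniformly in $m$, reached at some threshold $T_b$ depending only on $b$ and $\Delta$. Write $\Delta_{b^\infty m}$ for this stable subcomplex; critically, it depends on $m$ only through the exponents of variables outside $\supp(b)$.

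The key step of the argument is the claim that, under the corollary's hypothesis, $\widetilde H_i(\Delta_{b^\infty m};\field)=0$ for every monomial $m$, every minimal generator $b$, and every $i\geq 0$. I would prove this by contradiction: setting $m'':=b^{T_b}m$, we have $\Delta_{m''}=\Delta_{b^\infty m}$, so if the reduced $i$-th homology of this complex were nonzero, the hypothesis would give some $d$ with $\widetilde H_i(\Delta_{b^d m''};\field)=0$. But $b^d m''=b^{d+T_b}m$, whose associated subcomplex is still $\Delta_{b^\infty m}$ since $d+T_b\geq T_b$, contradicting the nonvanishing assumption on $\Delta_{b^\infty m}$.

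Setting $D:=\max_j T_{b_j}$ over the minimal generators $b_j$ of $B$ then completes the argument: any $m\in B^{[D]}$ has $b^D\mid m$ for some minimal $b$, so writing $m=b^D m'$ gives $\Delta_m=\Delta_{b^\infty m'}$, which has vanishing reduced homology in every degree by the claim, and Corollary~\ref{cor:bracket-powers} then yields that $F_\Delta$ is a virtual resolution. The main subtlety to nail down is the uniformity of $T_b$ in $m$, but this should reduce to a direct exponent comparison using only that $\Delta$ carries finitely many labels.
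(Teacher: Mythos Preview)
Your argument is correct and follows the paper's own route: the paper's proof is literally the single sentence ``Follows from Corollary~\ref{cor:bracket-powers},'' and what you have written is a careful unpacking of that sentence. The stabilization step---observing that $\Delta_{b^d m}$ becomes constant once $d$ exceeds the largest exponent of any $\supp(b)$-variable in any label, and that this threshold $T_b$ is independent of $m$---is exactly the content hidden behind the paper's one-liner, and your contradiction argument showing $\widetilde H_i(\Delta_{b^\infty m};\field)=0$ is the clean way to pass from the per-$(m,i)$ hypothesis to the uniform $D$ required by Corollary~\ref{cor:bracket-powers}.
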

\begin{proof}
  Follows from Corollary~\ref{cor:bracket-powers}.
\end{proof}

\section{Minimal Simplicial Complexes with Homology}
\label{sec:minimal-complexes}

Example~\ref{ex:intro-virtual} gives one example of how a virtual resolution with homology can arise from a labeled simpicial complex. In this section, we give lower bounds on the size of labeled simplicial complexes supporting virtual resolutions with homology. These constraints show that the examples arising from labelings of a bipyramid are the smallest labeled simplicial complexes which give rise to virtual resolutions with homology.
As a starting point, on $X=\mathbb{P}^n\times \mathbb{P}^k$, we note that the bipyramid complex $\bp^k$ can always be labeled to give a virtual resolution with homology.

\begin{prop}
  \label{prop:bipyramid-labeling}
  Fix $X=\mathbb{P}^n\times \mathbb{P}^k$ for integers $n$ and $k$ with $n\geq k\geq 0$, with Cox ring $S=\field[x_0,\ldots,x_n,y_0,\ldots,y_k]$. If $\Delta=\bp^k$ is the bipyramid over a $k$-simplex, then there exists a labeling $\ell$ of $\Delta$ by monomials in $S$ that such that $F_\Delta$ is a virtual resolution and not a free resolution.
\end{prop}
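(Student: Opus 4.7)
The plan is to generalize the labeling from Example~\ref{ex:intro-virtual}. Denote the vertices of $\bp^k$ by $\{w_0, w_1\} \cup \{v_0, \ldots, v_k\}$, viewing $\bp^k$ as the join of the two-point complex on $\{w_0, w_1\}$ with the $k$-simplex on $\{v_0, \ldots, v_k\}$. Define
\[
\ell(v_j) := x_j y_j \text{ for } 0 \leq j \leq k, \qquad \ell(w_0) := x_0 x_1 \cdots x_k, \qquad \ell(w_1) := x_0^2 x_1 \cdots x_k,
\]
and extend to higher-dimensional simplices by $\ell(\sigma) := \lcm_{v \in \sigma} \ell(v)$. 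The divisibility condition of Definition~\ref{def:labeled-simplicial-complex} then holds automatically, and because $\ell$ is an lcm-labeling, each subcomplex $\Delta_m$ equals the induced subcomplex of $\bp^k$ on $V_m := \{v : \ell(v) \mid m\}$.

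The join structure of $\bp^k$ makes the topology of every induced subcomplex transparent: for $T \subseteq \{w_0, w_1\} \cup \{v_0, \ldots, v_k\}$, the induced subcomplex is the join of $T \cap \{w_0, w_1\}$ with the simplex on $T \cap \{v_0, \ldots, v_k\}$. This join is contractible whenever $T \cap \{v_0, \ldots, v_k\} \neq \emptyset$ or $|T \cap \{w_0, w_1\}| \leq 1$. Hence $\widetilde{H}_i(\Delta_m;\field) = 0$ for all $i \geq 0$ in every case except $V_m = \{w_0, w_1\}$, where $\Delta_m$ is a pair of disjoint points with $\widetilde{H}_0(\Delta_m;\field) = \field$.

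It therefore suffices to control which $m$ produce $V_m = \{w_0, w_1\}$. To apply Theorem~\ref{thm:virtual-resolution} with $d = 1$: any $m \in B$ is divisible by some $y_j$ with $0 \leq j \leq k$, and if moreover $\ell(w_0), \ell(w_1) \mid m$ then $x_0 \cdots x_k \mid m$, forcing $x_j \mid m$ and hence $\ell(v_j) = x_j y_j \mid m$, so $v_j \in V_m$ and $V_m \neq \{w_0, w_1\}$. Conversely, $m := x_0^2 x_1 \cdots x_k$ has no $y$-factor, so $V_m = \{w_0, w_1\}$ exactly, and Theorem~\ref{thm:cellular-subcomplexes} gives $H_1(F_\Delta)_m \cong \widetilde{H}_0(\Delta_m;\field) \neq 0$, so $F_\Delta$ is not a free resolution. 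The main obstacle is the choice of apex labels: they must be $y$-free and distinct so that $\Delta_{\lcm(\ell(w_0),\ell(w_1))}$ actually produces homology, while $\lcm(\ell(w_0), \ell(w_1))$ must be divisible by each of $x_0, \ldots, x_k$ so that the $y_j$-factor forced by membership in $B$ pulls some $v_j$ into $V_m$.
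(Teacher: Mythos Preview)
Your proof is correct and follows the same overall strategy as the paper: exhibit an explicit lcm-labeling, observe that the only induced subcomplex of $\bp^k$ with nontrivial reduced homology is the pair $\{w_0,w_1\}$, show that $\Delta_m$ cannot equal this pair for $m\in B$, and exhibit a monomial $m$ for which it does. The only difference is the specific labeling: the paper uses the simpler choice $\ell(v_i)=y_i$, $\ell(w_0)=x_0$, $\ell(w_1)=x_1$, which makes the divisibility checks immediate and avoids the extra $x$-factors you carry through. Your labeling, modeled on Example~\ref{ex:intro-virtual}, works just as well but is more elaborate than necessary; as a minor bonus, it remains valid even in the degenerate case $n=0$, where the paper's choice of $\ell(w_1)=x_1$ would not make sense.
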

\begin{proof}
  Let $v_0,\dots, v_k$ be the vertices of the distinguished $k$-simplex in $\bp^k$ and the other two vertices be $w_0$ and $w_1$.

  Consider the lcm-labeling given by $\ell(v_i)=y_i$, and $\ell(w_0)=x_0$, $\ell(w_1)=x_1$. $F_\Delta$ is not a free resolution since $\tilde{H}_0(\Delta_{x_0x_1};\field)=\field\neq 0$. But any $m\in B$ is divisible by $y_i$ for some $i=1,\ldots,k$. Thus there exists $i=1,\ldots,k$ such that $\Delta_m$ contains $v_i$. As every induced subcomplex that contains any of the $v_1,\dots, v_k$ is contractible, we find that $\redH_j(\Delta_m;\field)=0$ for all $m\in B$ and all $j\geq 0$. Thus by Theorem~\ref{thm:virtual-resolution}, $F_\Delta$ is a virtual resolution.
\end{proof}

The simple examples described in Proposition~\ref{prop:bipyramid-labeling} also represent the smallest examples, for products of two projective spaces, of a virtual resolution with homology. More generally, for a smooth toric variety we can characterize the smallest such examples by the codimension of the irrelevant ideal.

For the remaining proofs in this section it will be useful to consider, for a subcomplex $\Delta_m$, the ideal generated by monomials $f$ for which the complex $\Delta_{fm}$ has no homology.
\begin{definition}
  \label{def:delta-annihilator}
  Given a labeled simplicial complex $\Delta$ and a monomial $m$, define the following monomial ideal in $S$:
  \[I(\Delta,m):=\langle f :  f \text{ monomial and } \redH_i(\Delta_{fm};\field)=0\text { for all } i\geq 0 \rangle.\]
\end{definition}

In the case where $\Delta_m$ is maximal with respect to inclusion among subcomplexes with homology, $I(\Delta,m)$ will in fact contain only the monomials $f$ for which $\redH_i(\Delta_{fm};\field)=0$.
With this definition, we can now prove our first result constraining the structure of virtual resolutions with homology.

\begin{prop}
  \label{prop:min-vertices}
  Fix a smooth toric variety $X$ with Cox ring $S$ and irrelevant ideal $B$.
  Set $c$ to be the codimension of the irrelevant ideal $B$ inside of the affine cone over $X$.
  Let $(\Delta,\ell)$ be an lcm-labeled simplicial complex.
  If $F_\Delta$ is a virtual resolution and there exists $i>0$ such that $H_i(F_\Delta)\neq 0$ then $\Delta$ contains at least $c+\left|\Delta_m\right|$ vertices.
\end{prop}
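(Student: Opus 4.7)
The plan is to quantify, via the ideal $I(\Delta,m)$ of Definition~\ref{def:delta-annihilator}, how many vertices of $\Delta$ outside $\Delta_m$ are needed in order to kill the homology of $\Delta_m$ after multiplication by elements of $B$. Since $H_i(F_\Delta)\neq 0$ for some $i>0$, Theorem~\ref{thm:cellular-subcomplexes} produces a monomial $m$ with $\redH_{i-1}(\Delta_m;\field)\neq 0$, and I would establish the bound $|\Delta|-|\Delta_m|\geq c$ for any such $m$.

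To each vertex $v\in \Delta\setminus\Delta_m$, associate the non-trivial monomial $\mu(v):=\ell(v)/\gcd(\ell(v),m)$, and set $M:=\{\mu(v):v\in \Delta\setminus\Delta_m\}$. The key first step is to prove the containment $I(\Delta,m)\subseteq \langle M\rangle$. If $f$ is a monomial with $\redH_j(\Delta_{fm};\field)=0$ for every $j\geq 0$, then because $\redH_{i-1}(\Delta_m;\field)\neq 0$ the inclusion $\Delta_m\subseteq \Delta_{fm}$ must be strict. Any vertex $v\in \Delta_{fm}\setminus \Delta_m$ then satisfies $\ell(v)\mid fm$ with $\ell(v)\nmid m$, which is equivalent to the divisibility $\mu(v)\mid f$, placing $f\in \langle M\rangle$.

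Since $F_\Delta$ is a virtual resolution, Theorem~\ref{thm:virtual-resolution} provides $d\geq 0$ with $B^d\subseteq I(\Delta,m)\subseteq \langle M\rangle$. Consequently every minimal prime of $\langle M\rangle$ contains a minimal prime of $B$ and thus has codimension at least $c$, so $\codim\langle M\rangle\geq c$. On the other hand, Krull's height theorem gives $\codim\langle M\rangle\leq |M|$, and the surjection $v\mapsto \mu(v)$ from $\Delta\setminus \Delta_m$ onto $M$ gives $|M|\leq |\Delta\setminus \Delta_m|$. Chaining these inequalities yields $|\Delta\setminus \Delta_m|\geq c$, equivalent to the claimed $|\Delta|\geq c+|\Delta_m|$ on vertex counts.

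The main subtlety is the containment $I(\Delta,m)\subseteq \langle M\rangle$, which hinges on the fact that an acyclic enlargement $\Delta_{fm}$ of the non-acyclic $\Delta_m$ is necessarily a proper enlargement; the new vertex then supplies a divisor of $f$ of the prescribed form $\mu(v)$. Everything else is a formal combination of Krull's height theorem with the definition of $c=\codim B$.
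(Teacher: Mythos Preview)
Your proof is correct and takes a somewhat different route from the paper's. Both arguments pass through the ideal $I(\Delta,m)$ and the containment $B^d\subseteq I(\Delta,m)$, and both ultimately rest on Krull's height theorem. The paper, however, first chooses $m$ so that $\Delta_m$ is \emph{maximal} among subcomplexes with nontrivial homology; this maximality is then used to show that for any $c$ distinct minimal generators $m_1,\ldots,m_c$ of $I(\Delta,m)$, the vertex sets $\Delta_{m_i m}\setminus\Delta_m$ are nonempty and pairwise disjoint, yielding $c$ vertices outside $\Delta_m$. You sidestep the maximality entirely by introducing the auxiliary monomial ideal $\langle M\rangle$ generated by the residuals $\mu(v)=\ell(v)/\gcd(\ell(v),m)$, proving the containment $I(\Delta,m)\subseteq\langle M\rangle$, and applying Krull's height theorem to $\langle M\rangle$ rather than to $I(\Delta,m)$. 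Your argument is more direct and in fact establishes the bound $|\Delta|\geq c+|\Delta_m|$ for \emph{every} monomial $m$ with $\redH_{i-1}(\Delta_m;\field)\neq 0$, not only a maximal one; the paper's approach, by contrast, exposes more of the combinatorial structure of the vertices outside $\Delta_m$ (the disjointness of the sets $\Delta_{m_im}\setminus\Delta_m$), which is reused downstream in the proof of Proposition~\ref{prop:unique-min-vertices}.
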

\begin{proof}
  Suppose that $F_\Delta$ is a virtual resolution and suppose that $H_i(F_\Delta)\neq 0$ for some $i>0$. By Theorem~\ref{thm:cellular-subcomplexes}, there exist some monomial $m$ such that $\redH_i(\Delta_m;\field)\neq 0$. Assume that $m$ is such that the complex $\Delta_m$ is maximal under inclusion among subcomplexes with this property.

  Now let $I=I(\Delta,m)$ as given in Definition~\ref{def:delta-annihilator}.
  Since $F_\Delta$ is a virtual resolution, by Corollary~\ref{cor:annihilator-of-homology} there exists some $d\geq 0$ such that $B^{[d]}\subseteq I$.
  However, as $B$ is codimension $c$, $I$ must also be codimension at least $c$. As a consequence, $I$ must contain at least $c$ minimal generators.

  Now let $m_1,\ldots,m_c\in I$ be a collection of $c$ distinct minimal generators of $I$.
  As these are minimal generators, for each pair $1\leq i,j\leq c$ with $i\neq j$, it follows that $\gcd(m_i,m_j)\notin I$.
  As $\Delta_m$ is maximal among subcomplexes satisfying $\redH_i(\Delta_m;\field)\neq 0$,
  if $\Delta_{fm}\neq \Delta_m$ then $\redH_i(\Delta_{fm};\field)=0$ and thus $f\in I$.
  As $m_1,\ldots,m_c$ are distinct minimal generators of $I$, for each pair of integers $1\leq i,j\leq c$ with $i\neq j$,
  $\gcd(m_i,m_j)\not\in I$. It follows that $\Delta_{\gcd(m_i,m_j)m}=\Delta_{m}$.

  On the other hand, $\Delta_{\gcd(m_i,m_j)m} = \Delta_{m_i\cdot m}\cap \Delta_{m_j\cdot m}$ and thus $\Delta_{m_i\cdot m}\cap \Delta_{m_j\cdot m}=\Delta_m$.
  As $\Delta_{m_i\cdot m}\neq \Delta_m$ it follows that each the sets of vertices of $\Delta_{m_i\cdot m}\setminus \Delta_m$ for $1\leq i\leq c$,
  are non-empty and pairwise disjoint.
  As a consequence, there must be at least $c$ distinct vertices in $\Delta$ that are not in $\Delta_m$.
  Thus $\Delta$ must contain at least $c+|\Delta_m|$ vertices.
\end{proof}

The most important case of Proposition~\ref{prop:min-vertices} is when $\Delta_m$ is a pair of points. Moreover, using a coarse bound on the codimension of $B$, we can get the following uniform bound.

\begin{cor}
  Fix a smooth non-affine toric variety $X$ with $\dim X\geq 1$.
  Let $(\Delta,\ell)$ be an lcm-labeled simplicial complex.
  If $F_\Delta$ is a virtual resolution and there exists $i>0$ with $H_i(F_\Delta)\neq 0$, then $\Delta$ contains at least $4$ vertices.
\end{cor}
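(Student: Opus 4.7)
The plan is to invoke Proposition~\ref{prop:min-vertices}, which asserts that $\Delta$ has at least $c+|\Delta_m|$ vertices for the subcomplex $\Delta_m$ constructed in its proof, and then to show that each of $c$ and $|\Delta_m|$ is at least $2$.

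The lower bound $|\Delta_m|\geq 2$ is routine. By construction, $\Delta_m$ has $\redH_j(\Delta_m;\field)\neq 0$ for some $j\geq 0$, i.e., nontrivial reduced homology in a nonnegative degree. A simplicial complex with at most one vertex is either void, the irrelevant complex $\{\emptyset\}$, or a single point, all of which have vanishing reduced homology in nonnegative degrees; hence $\Delta_m$ must contain at least two vertices.

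The main technical step is to show $c\geq 2$, and this is where the non-affineness hypothesis is used in an essential way. The irrelevant ideal $B\subseteq S$ is the squarefree monomial ideal with minimal generators $x^{\widehat\sigma}=\prod_{\rho\notin\sigma}x_\rho$ indexed by the maximal cones $\sigma$ of the fan $\Sigma$ of $X$. I would view $\Sigma$ as a simplicial complex on its set of rays; a standard Stanley--Reisner computation then identifies $c$ with the minimum cardinality of a minimal non-face of $\Sigma$. Since $X$ is non-affine, $\Sigma$ has at least two maximal cones, from which one can produce a non-face (for instance, the union of any maximal cone with a ray not contained in it), so $c$ is finite. On the other hand, every individual ray of $\Sigma$ is itself a cone and hence a face, so no singleton is a non-face, which forces $c\geq 2$.

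Combining these two bounds with Proposition~\ref{prop:min-vertices} then yields at least $c+|\Delta_m|\geq 4$ vertices in $\Delta$, as claimed. The only ingredient requiring genuine care is the codimension estimate on $B$; the rest reduces to formal observations about reduced homology of small complexes.
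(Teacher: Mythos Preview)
Your proposal is correct and follows the same strategy as the paper: establish $c\geq 2$ from the non-affineness hypothesis and then invoke Proposition~\ref{prop:min-vertices}. Your justification of $c\geq 2$ via minimal non-faces of the fan is in fact more careful than the paper's, which simply asserts that having at least two generators (one per maximal cone) forces $\codim B\geq 2$; you also make explicit the bound $|\Delta_m|\geq 2$, which the paper leaves implicit.
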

\begin{proof}
  As $X$ is not affine, the fan $\Sigma_X$ corresponding to $X$ must contain at least two maximal cones.
  As the generators of the irrelevant ideal are in one-to-one correspondence with the maximal cones in $\Sigma_X$,
  the irrelevant ideal of $X$ is codimension at least $2$, and so the result follows from Proposition~\ref{prop:min-vertices}. 
\end{proof}

Even independent of the reduction results we prove in Section~\ref{sec:homology-reduction}, Proposition~\ref{prop:min-vertices} gives constraints on the structure of virtual resolutions. It suggests that to get virtual resolutions with homology, we need to give ideals with relatively large numbers of generators, at least in the monomial case.

In the smallest possible case, that is where $\Delta_m$ contains exactly $2$ vertices and $\Delta$ contains $c+2$ vertices, $c+2$ is exactly the number of vertices in the $c$-dimensional bipyramid $\bp^{c-1}$. We next show that in fact in this minimal case, $\Delta$ must be exactly a bipyramid. To this end we need the following lemma, which describes a criterion for a complex on $n+2$ vertices to be $\bp^{n-1}$.

\begin{lemma}
  \label{lem:bipyrimid}
  Let $\Delta$ be an acyclic simplicial complex on $n+2$ vertices named $u,w$ and $v_1,\ldots,v_n$. Suppose that for each $k=1,\ldots,n$, the induced subcomplex of $\Delta$ on $u,w,v_1,\ldots,\widehat{v_{k}},\ldots,v_n$ is a bipyrimid over $\{v_1,\ldots,\widehat{v_{k}},\ldots,v_n\}$. Then $\Delta$ is a bipyrimid over $\{v_1,\ldots,v_n\}$.
\end{lemma}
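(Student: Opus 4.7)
The plan is to use the hypothesis to reduce the question to the fate of just three specific faces of $\Delta$, then apply Mayer--Vietoris to force all three into $\Delta$ via acyclicity. Set $A := \{u, v_1, \ldots, v_n\}$, $B := \{w, v_1, \ldots, v_n\}$, and $D := \{v_1, \ldots, v_n\}$. Applying the induced-subcomplex hypothesis for each $k$ immediately yields two facts: $\{u, w\} \notin \Delta$ (no bipyramid contains this edge), and every proper subset of $A$ (and, symmetrically, of $B$) that omits some $v_k$ is a face of $\Delta$, since it sits inside one of the two maximal simplices of the relevant bipyramid $\bp^{n-2}$. Consequently $\partial A \setminus \{D\}$ and $\partial B \setminus \{D\}$ are subcomplexes of $\Delta$, no face of $\Delta$ contains both $u$ and $w$, and the only undetermined faces are $A$, $B$, and $D$ themselves.

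Because no face of $\Delta$ contains both $u$ and $w$, we have $\Delta = C_A \cup C_B$, where $C_A := \Delta \cap 2^A$ and $C_B := \Delta \cap 2^B$ are the induced subcomplexes on $A$ and $B$, with $C_A \cap C_B = \Delta \cap 2^D$. The possible configurations of $\{A, B, D\} \cap \Delta$ pin down these pieces up to homotopy. When $A \in \Delta$, the complex $C_A$ is the full simplex on $A$ (contractible), and $D \in \Delta$ automatically; otherwise $C_A$ is either $\partial A \simeq S^{n-1}$ (if $D \in \Delta$ but $A \notin \Delta$) or the cone from $u$ over $\partial D$ (contractible; if both $A$ and $D$ are absent). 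An analogous description holds for $C_B$. The intersection $C_A \cap C_B$ is the simplex $D$ (contractible) when $D \in \Delta$, and otherwise is $\partial D \simeq S^{n-2}$.

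Running the Mayer--Vietoris sequence in reduced homology across these configurations, one checks in each case where $A$ or $B$ is missing that $\widetilde{H}_{n-1}(\Delta) \neq 0$: either one of $C_A, C_B$ itself contributes a nontrivial $(n-1)$-sphere (when $D \in \Delta$), or the connecting homomorphism produces such a class from $C_A \cap C_B \simeq S^{n-2}$ (when $D \notin \Delta$). The acyclicity of $\Delta$ therefore forces $A, B \in \Delta$, whence $\Delta = 2^A \cup 2^B$; combined with $\{u, w\} \notin \Delta$, this is exactly $\{u, w\} \star \{v_1, \ldots, v_n\} = \bp^{n-1}$.

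The main obstacle is careful bookkeeping: correctly identifying $C_A$, $C_B$, and their intersection in each of the four configurations, and verifying the Mayer--Vietoris calculation in the low-dimensional edge case $n = 1$, where $D = \{v_1\}$ is automatically a vertex of $\Delta$, so $\partial D$ is the empty complex and the argument collapses to computing $\widetilde{H}_0$.
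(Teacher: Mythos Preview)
Your proof is correct and takes a somewhat different route from the paper's. Both arguments open identically: the induced-subcomplex hypothesis forces $\{u,w\}\notin\Delta$ and pins down every face except possibly $A$, $B$, and $D$. From there the paper proceeds in two passes. First it observes that $\bigcup_k\Gamma_k\subseteq\Delta$ already has nontrivial $\redH_{n-1}$, so acyclicity forces $\Delta$ to contain some $n$-simplex, and any $n$-simplex avoiding $\{u,w\}$ must contain $D$; hence $D\in\Delta$. Second, now knowing the full $(n-1)$-skeleton, it computes $\dim\redH_{n-1}(\Delta^{(n-1)})=2$, so both available $n$-simplices $A$ and $B$ must be present to kill that homology. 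Your Mayer--Vietoris decomposition $\Delta=C_A\cup C_B$ instead handles all missing-face configurations in a single uniform sweep, reading off $\redH_{n-1}(\Delta)$ directly from the homotopy types of $C_A$, $C_B$, and $C_A\cap C_B$. The paper's version is slightly more hands-on and avoids the case tabulation; yours is more systematic and makes the role of the intersection $\Delta\cap 2^D$ transparent. The underlying leverage is the same in both: acyclicity of $\Delta$ is incompatible with any of the three candidate top faces being absent.
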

\begin{proof}
  Let $\Gamma_k$ be the induced subcomplex of $\Delta$ on the vertices $u,w,v_1,\ldots,\widehat{v_{k}},\ldots,v_n$. Note in particular that as a bipyramid, $\Gamma_k$ contains exactly two $(n-1)$-simplices, namely $\{u,v_1,\ldots,\widehat{v_{k}},\ldots,v_n\}$ and $\{w,v_1,\ldots,\widehat{v_{k}},\ldots,v_n\}$. Moreover, for all $k$, $\Gamma_k$ does not contain the edge $\{u,w\}$. As such, $\Delta$ also does not contain the edge $\{u,w\}$. 
  
  We start by showing that the $(n-1)$-skeleton of $\Delta$ has the correct simplices. As such, let $\sigma\subset \{u,w,v_1,\ldots,v_n\}$ be a simplex. Then we have three cases of interest. First, if $u$ and $w$ are both in $\sigma$ then $\sigma\not\in \Gamma_k$. However as $\Gamma_k$ is an induced subcomplex, this implies that $\sigma\not\in\Delta$. Second, if exactly one of $u$ and $w$ is in $\sigma$, then $\sigma$ contains all but one of $v_1,\ldots,v_n$, and as such $\sigma\in \Gamma_k\subseteq \Delta$ for some $k=1,\ldots,n$. Finally, we must consider the case where neither $u$ nor $w$ is in $\sigma$.
  
  Fix $\sigma=\{v_1,\ldots,v_n\}$, the unique $(n-1)$-simplex not containing $u$ and $w$. As $\bigcup_{k=1}^n \Gamma_k$ has non-zero $\redH_{n-1}$ but $\Delta$ is acyclic, we find that $\Delta$ must be at least $n$-dimensional. However, the only $n$-simplices not containing the edge $\{u,w\}$ must contain $\sigma$, and so $\Delta$ must contain $\sigma$. All together, this implies that the $(n-1)$-skeleton of $\Delta$ contains exactly all $(n-1)$-simplices not containing the edge $\{u,w\}$.

  Then $(n-1)$-skeleton of $\Delta$ has $\dim \redH_{n-1}(\Delta^{(n-1)};\field)=2$ and $\Delta$ is acyclic, $\Delta$ must contain at least two $n$-simplices. Observe that the only $n$-simplices compatible with the $n-1$-skeleton of $\Delta$ are the simplices $\{u,v_1,\ldots,v_n\}$ and $\{w,v_1,\ldots,v_n\}$. As a consequence, $\Delta$ must contain both of these $n$-simplices. Thus $\Delta$ is exactly the bipyrimid over $\{v_1,\ldots,v_n\}$.
\end{proof}

With that lemma complete, we can now move on to the main result of this section. As a consequence of Proposition~\ref{prop:min-vertices}, the minimum number of vertices a simplicial virtual resolution with homology must have is $c+2$, where $c$ is the codimension of the irrelevant ideal. We now show that bipyramids are the only simplicial complexes for which labelings exist that achieve this minimum.

\begin{prop}
  \label{prop:unique-min-vertices}
  Let $X$ be a normal toric variety with irrelevant ideal $B$ and let $c$ be the codimension of $B$.
  If $(\Delta,\ell)$ is an lcm-labeled simplicial complex on $c+2$ vertices such that $F_\Delta$ is a virtual resolution, and there exists $i>0$ with $H_i(F_{\Delta})\neq 0$, then $\Delta=\bp^{c-1}$.
\end{prop}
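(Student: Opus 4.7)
The plan is to combine the counting argument behind Proposition~\ref{prop:min-vertices} with Lemma~\ref{lem:bipyrimid} applied inductively, after first pinning down the combinatorial structure of $\Delta$ around $\Delta_m$. I fix $m$ maximal under divisibility among monomials for which $\Delta_m$ has some nontrivial reduced homology, and rerun the argument from Proposition~\ref{prop:min-vertices}. Since $|\Delta|=c+2$ and the bound $|\Delta|\geq c+|\Delta_m|$ in that proof is tight, $\Delta_m$ must have exactly two vertices, say $w_0$ and $w_1$; a one-vertex $\Delta_m$ would be contractible, so this forces $\Delta_m$ to consist of two isolated points with $\redH_0(\Delta_m;\field)=\field$ as its only nontrivial reduced homology. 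The remaining $c$ vertices of $\Delta$ are in bijection with $c$ distinct minimal generators $m_1,\ldots,m_c$ of $I(\Delta,m)$; label them so that $v_k$ is the unique vertex of $\Delta_{m_k m}\setminus\Delta_m$.

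Next, I rule out the edge $\{w_0,w_1\}$: by the lcm-labeling its label would be $\lcm(\ell(w_0),\ell(w_1))$, which divides $m$, so the edge would lie in $\Delta_m$ and contradict the disconnection of $\Delta_m$. Each $\Delta_{m_k m}$ is therefore an acyclic simplicial complex on three vertices that omits $\{w_0,w_1\}$, forcing it to be the path $w_0 - v_k - w_1$ and placing every spoke $\{w_i,v_k\}$ in $\Delta$. Setting $L:=\lcm(m_1,\ldots,m_c)\in I(\Delta,m)$, every vertex label of $\Delta$ divides $Lm$; because the labeling is lcm, every simplex label divides $Lm$ as well, giving $\Delta_{Lm}=\Delta$, so $\Delta$ is itself acyclic.

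To finish I induct on $c$ via Lemma~\ref{lem:bipyrimid}. The base case $c=1$ is immediate: the only acyclic simplicial complex on three vertices that omits a specified edge yet contains both remaining spokes is the path $\bp^0$. For the inductive step I aim to show that for each $k$ the induced subcomplex $\Gamma_k$ of $\Delta$ on $\{w_0,w_1,v_1,\ldots,\widehat{v_k},\ldots,v_c\}$ equals $\bp^{c-2}$, so that Lemma~\ref{lem:bipyrimid} yields $\Delta=\bp^{c-1}$. The natural identification is $\Gamma_k=\Delta_{m^{(k)}}$ with $m^{(k)}:=m\cdot\prod_{j\neq k}m_j\in I(\Delta,m)$, valid under the tightness condition that $g_k:=\ell(v_k)/\gcd(\ell(v_k),m)$ does not divide $\prod_{j\neq k}m_j$. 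I then invoke the inductive hypothesis by recognizing $(\Gamma_k,\ell|_{\Gamma_k})$ as the data of a virtual resolution on a smaller toric variety $X'$ with irrelevant ideal of codimension exactly $c-1$, constructed from $X$ by removing the ray of its fan corresponding to the $B$-generator responsible for $v_k$.

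The hard part will be this last reduction: producing a smaller toric variety $X'$ on which $F_{\Gamma_k}$ is a virtual resolution with codim-$(c-1)$ irrelevant ideal, together with the tightness check $g_k\nmid\prod_{j\neq k}m_j$ that makes $\Delta_{m^{(k)}}$ equal to $\Gamma_k$. Should either step prove unwieldy, the alternative is to bypass induction entirely and directly show both top-dimensional simplices $\{w_0,v_1,\ldots,v_c\}$ and $\{w_1,v_1,\ldots,v_c\}$ lie in $\Delta$ using acyclicity of $\Delta_{fm}$ as $f$ ranges over products of the $m_k$; this suffices because $\Delta\subseteq\bp^{c-1}$ has already been established from the absence of the $\{w_0,w_1\}$ edge.
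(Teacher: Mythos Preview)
Your overall architecture matches the paper's: locate a two-vertex $\Delta_m$, pair the remaining $c$ vertices with minimal generators of $I(\Delta,m)$, show the relevant induced subcomplexes are acyclic, and then feed everything into Lemma~\ref{lem:bipyrimid}. The substantive difference is in how you handle the step you yourself flag as ``the hard part'': controlling the vertex set of $\Delta_{m^{(k)}}$ (your tightness condition $g_k\nmid\prod_{j\neq k}m_j$), and more generally of $\Delta_{fm}$ for $f$ a product of a subset of the $m_k$.

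Neither of your proposed fixes works. The reduction to a smaller toric variety $X'$ is not well-defined: the labels live in the Cox ring of $X$, not of $X'$; deleting a ray from the fan of $X$ need not produce a smooth toric variety, need not drop the codimension of the irrelevant ideal by exactly one, and there is no natural way to transport the labeling so that $F_{\Gamma_k}$ becomes a virtual resolution over $X'$. The alternative route (``$\Delta\subseteq\bp^{c-1}$, now show both top simplices lie in $\Delta$'') is fine in principle, but to run Lemma~\ref{lem:bipyrimid} inductively you still need to know, for every $\mathcal I\subseteq\{1,\dots,c\}$, that the induced subcomplex $\Delta_{m\cdot\prod_{k\in\mathcal I}m_k}$ has vertex set exactly $\{w_0,w_1\}\cup\{v_k:k\in\mathcal I\}$. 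With arbitrary minimal generators $m_k$ this can fail, since the $m_k$ need not be pairwise coprime.

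The paper resolves this with one clean move that replaces your maximality hypothesis: choose $m$ so that $I(\Delta,m)$ is \emph{prime}. This is achieved by a Noetherian ascent: whenever $I(\Delta,m)$ is not prime, pick $f,g\notin I(\Delta,m)$ with $fg\in I(\Delta,m)$, note that $f\notin I(\Delta,m)$ forces $\redH_i(\Delta_{fm};\field)\neq 0$ for some $i$, and replace $m$ by $fm$; the chain $I(\Delta,m)\subseteq I(\Delta,fm)\subseteq\cdots$ stabilizes. Once $I(\Delta,m)$ is prime, $B^d\subseteq I(\Delta,m)$ forces $B\subseteq I(\Delta,m)$, so $I(\Delta,m)$ contains at least $c$ \emph{variables} $x_1,\dots,x_c$, and you may take these as your $m_k$. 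Now tightness is automatic: from $\ell(v_k)\mid x_k m$ and $\ell(v_k)\nmid m$ one gets that $\ell(v_k)\mid fm$ if and only if $x_k\mid f$, so $\Delta_{m\cdot\prod_{k\in\mathcal I}x_k}$ has precisely the vertices $w_0,w_1$ and $\{v_k:k\in\mathcal I\}$. From there your acyclicity argument and the inductive application of Lemma~\ref{lem:bipyrimid} go through directly, with no need for any auxiliary toric variety.

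A minor side remark: ``$m$ maximal under divisibility'' does not make sense as stated, since monomials have no maximal elements; you mean that $\Delta_m$ is maximal under inclusion among subcomplexes with nontrivial reduced homology. But in light of the above you should drop this choice of $m$ altogether in favor of the primality condition.
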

\begin{proof}
  Begin by choosing $m$ such that $\redH_i(\Delta_m;\field)\neq 0$ for some $i\geq 0$ and $I(\Delta,m)$ is prime.

  To show that such an $m$ exists, suppose that $I(\Delta,m)$ is not prime.
  Then there exist monomials $f$ and $g$ such that $fg \in I(\Delta,m)$ with $f\notin I(\Delta,m)$ and $g\notin I(\Delta,m)$.
  Since $f\notin I(\Delta,m)$, it follows that $\redH_i(\Delta_{fm};\field)\neq 0$ for some $i\geq 0$.
  Moreover, a quick computation shows that $I(\Delta,m)\subseteq I(\Delta,fm)$.
  Replace $m$ with $fm$ and repeat until $I(\Delta,m)$ is prime.
  As $S$ is Noetherian this process terminates, so there exists an $m$ for which $I(\Delta,m)$ is prime.
  For simplicity of exposition set $I:=I(\Delta,m)$.

  Since $\redH_i(\Delta_m;\field)\neq 0$ for some $i\geq 0$,  we know $\Delta_m$ has at least $2$ vertices.  Because in addition $\Delta$ has $c+2$ vertices,  Proposition~\ref{prop:min-vertices} implies $\Delta_m$ must contain exactly $2$ vertices.
  Call these two distinguished vertices $u$ and $w$.
  
  As the complex $F_\Delta$ is a virtual resolution and $I$ annihilates all higher homology, it follows that $B^d\subseteq I$. Because $I$ is prime, in fact $B\subseteq I$.
  As $B$ is codimension $c$, $I$ is codimension at least $c$.
  Therefore, as $I$ is a prime monomial ideal of codimension at least $c$, it must contain at least $c$ distinct variables.

  Denote $c$ of the variables in $I$ by $x_1,\ldots,x_c$.
  As each of the $x_i$ is a minimal generator of $I$, each of $\Delta_{x_i m}$ is a distinct subcomplex.
  Moreover $\Delta_{m}\subseteq \Delta_{x_i m}$, thus each $\Delta_{x_i m}$ contains exactly 3 vertices.
  As $\Delta_{x_i m}$ is an acylic simplicial complex on three vertices that is not a simplex,
  $\Delta_{x_i m}$ must be a chain of two edges from $u$ to $w$.
  Since $\Delta$ contains $2+c$ vertices, and $\Delta_m$ contains $2$ vertices,
  each vertex of $\Delta$ is either in $\Delta_m$ or in a unique $\Delta_{x_i m}$.
  Now denote by $v_i$ the unique vertex in $\Delta_{x_i m}$ not in $\Delta_m$.
  
  Now for $\mathcal{I}\subseteq \{1,\ldots,c\}$, define the monomial $m_{\mathcal{I}}:=\prod_{i\in \mathcal{I}}x_i$.
  Then for $i\in \mathcal{I}$, we have $\Delta_{x_i m}\subseteq \Delta_{m_{\mathcal{I}} m}$ and consequently, $v_i\in \Delta_{m_{\mathcal{I}}m}$ for all $i\in I$.
  Conversely since $\ell(v_i) \mid x_i m$ and $\ell(v_i)\nmid m$, if $f$ is a monomial in $S$ such that $\ell(v_i) \mid f m$, then $f$ is divisible by $x_i$.
  Thus if $v_i\in \Delta_{m_{\mathcal{I}}\cdot m}$, then $i\in \mathcal{I}$. Thus the vertices of $\Delta_{m_{\mathcal{I}}\cdot m}$ are exaclty the vertices $u$, $w$ along with $v_i$ for $i\in \mathcal{I}$.

  As each $\Delta_{m_{\mathcal{I}}\cdot m}$ contains more than $2$ vertices for $\mathcal{I}$ non-empty, and $\Delta$ contains $2+c$ vertices,
  by Proposition~\ref{prop:min-vertices}, each $\Delta_{m_{\mathcal{I}}\cdot m}$ is acyclic when $\mathcal{I}$ is non-empty.
  Inductively applying Lemma~\ref{lem:bipyrimid} to the subcomplexes
  $\Delta_{m_{\mathcal{I}}\cdot m}$ for $\mathcal{I}\subseteq \{1,\ldots,c\}$ with $\mathcal{I}\neq \emptyset$,
  we find that the complex $\Delta_{m_{\mathcal{I}} m}$ is a bipyramid over the vertices $\{v_i:i\in \mathcal{I}\}$.
  Finally, for $\mathcal{I}=\{1,\ldots,c\}$ we have $\Delta = \Delta_{m_{\mathcal{I}}\cdot m}$, and so $\Delta$ is the bipyramid $\bp^{c-1}$ as desired.
  \end{proof}

\section{Subdivisions and Homology}
\label{sec:homology-reduction}

We now move on to the proofs of the main theorems of the paper, beginning with the proof of Theorem~\ref{thm:subdivision-virtual}, which we recall states that subject to some divisibility criteria, subdivisions preserve virtual resolutions.

\begin{proof}[Proof of Theorem~\ref{thm:subdivision-virtual}]
  Let $(\Delta,\ell)$ be a labeled simplicial complex such that the corresponding complex $F_\Delta$ is a virtual resolution of $S/I$ for a monomial ideal $I$.
  Let $(\Delta',\ell')$ be a virtual compatible subdivision of $(\Delta,\ell)$ at a point $\newvert$ in $\goodsimp$;
  recall that this means the monomial $\ell'(\newvert)$ satisfies two conditions from Theorem~\ref{thm:subdivision-virtual}:
  \begin{enumerate}
  \item
    $\ell'(\newvert)$ divides $\ell(\goodsimp)$, and
  \item
    $\ell'(\newvert)\in \langle \ell(v_0),\ldots,\ell(v_r)\rangle : B^{\infty}$.
  \end{enumerate}

  Condition (\ref{elarge}) immediately implies that $\ell'(\newvert)\in I:B^\infty$.
  Let $J := I+\ell'(\newvert)$, this is the ideal generated by the labels of $(\Delta',\ell')$.
  Note that $I\subseteq J \subseteq I:B^{\infty}$ thus we have $J:B^{\infty}=I:B^{\infty}$.
  Thus if $F_{\Delta'}$ is a virtual resolution, then it is a virtual resolution of $S/I$.
  Thus all that remains to show is that $F_{\Delta'}$ is a virtual resolution.
  By Theorem~\ref{thm:virtual-resolution}, it suffices to find $d\geq 0$ such that $\redH_i(\Delta'_m;\field)=0$ for all monomials $m\in B^d$.

  As the subdivision $(\Delta',\ell')$ is virtual compatible, we can fix $e\geq 0$ such that
  \[\ell'(\newvert)\in \langle \ell(v_0),\ldots,\ell(v_r)\rangle : B^{e}.\]
  Then since $F_\Delta$ is a virtual resolution, Theorem~\ref{thm:virtual-resolution} implies
  that for all $d\gg 0$, we have $\redH_i(\Delta_m;\field)=0$ for all monomials $m\in B^{d}$.
  We will show that if $d$ and $e$ satisfy $d\gg e$ and $m\in B^{d}$, $\redH_i(\Delta'_m;\field)\cong \redH_i(\Delta_m;\field)$ and thus $\redH_i(\Delta'_m;\field)=0$.

  Fix $m\in B^{d}$, if $\ell'(\newvert)$ does not divide $m$, then $\ell(\omega)$ does not divide $m$.
  In this case, $\newvert\notin \Delta'_m$ and $\omega\notin \Delta_m$ so $\Delta'_m=\Delta_m$. But then $\redH_i(\Delta'_m;\field) = \redH_i(\Delta_m;\field)=0$ as desired. 
  We thus restrict to the case where $\ell'(\newvert)$ divides $m$ and thus $\newvert$ is a vertex of $\Delta'_m$. Then define the simplex $\tau$ by
  \[\tau := \{v_i\in \goodsimp : v_i\in \Delta_m\}.\]
  
  If $\tau=\goodsimp$, then $\goodsimp\in \Delta_m$ and $\Delta'_m$ is simply the subdivision of $\Delta_m$ at $\newvert$ in the simplex $\goodsimp$.  A simplicial complex and its subdivision have isomorphic homology, so we have the desired isomorphism $\redH_i(\Delta'_m;\field) \cong \redH_i(\Delta_m;\field)$.

  Otherwise, we may assume that $\tau\neq \goodsimp$.
  The only simplices of $\Delta'_m$ not in $\Delta_m$ are those containing $\newvert$.
  Consequently, $\cpxstar_{\newvert}(\Delta'_m)$ is the subcomplex of $\Delta'_m$ containing all simplices that contain $\newvert$.
  Moreover, as $\tau\neq \goodsimp$ it follows that $\goodsimp \notin \Delta_m$, thus $\Delta_m\subseteq \Delta'_m$.
  Consequently, we get $\Delta'_m = \cpxstar_{\newvert}(\Delta'_m) \cup \Delta_m$, and moreover 
  \begin{align*}
    \cpxstar_{\newvert}(\Delta'_m)\cap \Delta_m &= \{\sigma \in \Delta_m : \sigma \cup \{\newvert\} \in \Delta'_m\}\\
    &= \link_{\newvert}(\Delta'_m).
  \end{align*}

  Observe that the subcomplex $\cpxstar_{\newvert}(\Delta'_m)$ is contractible.
  By Mayer--Vietoris, to show $\redH_i(\Delta'_m;\field)\cong \redH_i(\Delta_m;\field)$ for all $i\geq 0$, it suffices to show that $\cpxstar_{\newvert}(\Delta'_m)\cap \Delta_m$ is non-empty and contractible.  Equivalently, it suffices to show that every maximal simplex of $\link_\newvert(\Delta'_m)$ contains $\tau$ and that $\tau$ is non-empty.

  We begin by showing that $\tau$ is non-empty. For this it suffices to find $0\leq i\leq r$ such that $\ell(v_i)$ divides $m$.
  As $d\gg e\geq 0$, $\ell'(\newvert)\mid m$, and $m\in B^d$, there exists $m'\in B^{e}$ such that $\ell'(\newvert)m'\mid m$.
  As $\ell'(\newvert)\in \langle \ell(v_0),\ldots,\ell(v_r)\rangle : B^{e}$, we find that
  \[\ell'(\newvert)m'\in \langle \ell(v_0),\ldots,\ell(v_r)\rangle.\]
  Thus there exists $0\leq i\leq r$ such that $\ell(v_i)\mid \ell'(\newvert)m'$ and thus $\ell(v_i)\mid m$.
  Consequently, $v_i\in \tau$ and so $\tau$ is non-empty.

  Fix $\sigma\in \link_\newvert(\Delta'_m)$. Then $\sigma\cup \{\newvert\}\in \Delta'_m\subseteq \Delta'$ which implies that $\sigma\cup \goodsimp$ is a simplex of $\Delta$.
  Observe that $(\sigma \cup \tau) \cup \goodsimp = \sigma\cup \goodsimp \in \Delta$ and thus $\sigma\cup \tau\cup \{\newvert\}\in \Delta'$.
  As $\Delta'_m$ is an induced subcomplex of $\Delta'$ and each of $\sigma$, $\tau$, and $\{\newvert\}$ are simplices in $\Delta'_m$
  it follows that $\sigma\cup \tau\cup \{\newvert\}\in \Delta'_m$.
  This implies that $\sigma\cup \tau \in \link_\newvert(\Delta'_m)$, and consequently every maximal simplex of $\link_\newvert(\Delta'_m)$ contains $\tau$.
  Then as $\tau$ is non-empty, we find $\link_\newvert(\Delta'_m)$ is contractible.
  Finally, this implies $\redH_i(\Delta'_m;\field) \cong \redH_i(\Delta_m;\field)$.

  We conclude by observing again that $\redH_i(\Delta_m;\field)=0$ for all $m\in B^{d}$.
  The isomorphism of homology then implies $\redH_i(\Delta'_m;\field) = 0$ for all $m\in B^{d}$ and
  thus $F_{\Delta'}$ is a virtual resolution.
\end{proof}

The content of the above results can be concisely stated in terms of a map of complexes. Let $(\Delta',\ell')$ be a virtual compatible subdivision of $(\Delta,\ell)$ at a point $\newvert$ in $\goodsimp$, then there is a map of complexes $\iota_{\goodsimp}:F_\Delta\rightarrow F_{\Delta'}$ given by
\begin{equation*}
  \iota_{\goodsimp}([\sigma]) := 
  \begin{cases}
    \displaystyle\sum_{\substack{\tau\subset \sigma,~ \sigma\setminus \tau\subset \goodsimp, \\ \left|\tau \right| = \left|\sigma\right| - 1}}\frac{\ell(\sigma)}{\ell'(\tau \cup \{\newvert\})}[\tau\cup \{\newvert\}] & \goodsimp\subseteq \sigma, \\
    [\sigma] & \text{otherwise}.
  \end{cases}
\end{equation*}

It is not immediate that this map is well-defined, as the divisibility relations between $\ell(\sigma)$ and $\ell'(\tau\cup \{\newvert\})$ are not immediate. However, the following lemma states that $\iota_\goodsimp$ is well-defined and injective, and if $F_\Delta$ is a virtual resolution, the induced map on homology is an isomorphism at the level of sheaves.

\begin{lemma}
  \label{lem:induces-sheaf-isomorphism}
Fix a smooth toric variety $X$ with Cox ring $S$. 
  Let $(\Delta,\ell)$ be an lcm-labeled simplicial complex 
  and $(\Delta',\ell')$ be an lcm-labeled virtual compatible subdivision of $(\Delta,\ell)$ at a point $\newvert$ in a simplex $\goodsimp\in \Delta$.
  The map $\iota_{\goodsimp}$ is an injective map of chain complexes, and if $F_\Delta$ is a virtual resolution, then $\iota_{\goodsimp}$ induces an isomorphism $H_i(F_\Delta)^\sim\xrightarrow{\cong} H_i(F_{\Delta'})^\sim$.
\end{lemma}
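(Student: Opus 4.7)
The plan is to address the three assertions in sequence: well-definedness together with the chain-map property, injectivity, and finally the sheaf-level quasi-isomorphism. For well-definedness, the only nontrivial point is the divisibility $\ell'(\tau \cup \{\newvert\}) \mid \ell(\sigma)$ in the nontrivial branch of the formula. Since $\ell'$ is an lcm-labeling, $\ell'(\tau \cup \{\newvert\}) = \lcm(\ell(\tau), \ell'(\newvert))$; both $\ell(\tau) \mid \ell(\sigma)$ (from $\tau \subsetneq \sigma$) and $\ell'(\newvert) \mid \ell(\omega) \mid \ell(\sigma)$ (using condition (\ref{esmall}) and $\omega \subseteq \sigma$) hold, so the lcm divides $\ell(\sigma)$. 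The identity $\partial' \iota_{\omega} = \iota_{\omega} \partial$ is then a case-by-case computation: for $\sigma$ with $\omega \not\subseteq \sigma$ both sides expand as $\partial \sigma$ with each face routed through whichever branch of $\iota_{\omega}$ it triggers, and for $\sigma \supseteq \omega$ the verification mirrors the classical subdivision chain map check, now with the monomial coefficients tracked via repeated application of the lcm identity.

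Injectivity can be verified in each multidegree $\alpha$ separately. By Theorem~\ref{thm:cellular-subcomplexes}, the strands $(F_\Delta)_\alpha$ and $(F_{\Delta'})_\alpha$ are the cellular chain complexes of $\Delta_m$ and $\Delta'_m$ over $\field$ for $m = x^\alpha$. After rescaling each basis element $[\sigma]$ by $x^\alpha / \ell(\sigma)$ the monomial coefficients in $\iota_{\omega}$ collapse to $\pm 1$, and the form of the resulting $\field$-linear map depends only on the relative position of $\omega$ and $\newvert$ with respect to $\Delta_m$. If $\omega \in \Delta_m$, then $\Delta'_m$ is literally the stellar subdivision of $\Delta_m$ at $\newvert$, and the map is the classical subdivision chain map; if $\newvert \notin \Delta'_m$ then $\Delta'_m = \Delta_m$ and the map is the identity; and if $\newvert \in \Delta'_m$ but $\omega \notin \Delta_m$, the map is the natural inclusion of chain complexes induced by $\Delta_m \hookrightarrow \Delta'_m$. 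Each of these is injective.

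Finally, Theorem~\ref{thm:subdivision-virtual} guarantees that $F_{\Delta'}$ is itself a virtual resolution, so $H_i(F_\Delta)^\sim = 0 = H_i(F_{\Delta'})^\sim$ for every $i > 0$ and the induced map in positive degrees is trivially an isomorphism. In degree zero, $\iota_{\omega}$ acts as the identity on $F_{\Delta,0} = S = F_{\Delta',0}$, so it descends to the natural surjection $S/I \twoheadrightarrow S/J$, where $I$ and $J$ denote the vertex-label ideals of $\Delta$ and $\Delta'$ (so $J = I + \langle \ell'(\newvert)\rangle$). Condition (\ref{elarge}) gives $\ell'(\newvert) \in I : B^{\infty}$, hence $I : B^{\infty} = J : B^{\infty}$, and by Remark~\ref{rem:vres-saturation} this surjection becomes an isomorphism after sheafifying. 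The main obstacle is the chain-map verification, where the sign and coefficient bookkeeping for the stellar subdivision is intricate; however, the underlying combinatorics is entirely classical and the monomial coefficients fall into place via the lcm identity.
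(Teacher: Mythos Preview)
Your proof is correct and follows essentially the same outline as the paper's: the divisibility check for well-definedness via the lcm identity and condition~(\ref{esmall}), and the split into $i>0$ (trivial once Theorem~\ref{thm:subdivision-virtual} gives that $F_{\Delta'}$ is also virtual) versus $i=0$ (handled by condition~(\ref{elarge}) and Remark~\ref{rem:vres-saturation}) all match. Your injectivity argument, via a multidegree-by-multidegree trichotomy identifying the map with a subdivision chain map, an identity, or an inclusion, is more elaborate than the paper's one-line observation that distinct basis elements of $F_\Delta$ map to combinations supported on disjoint sets of simplices of $\Delta'$ (since $\sigma$ is recovered from any $\tau$ in the sum as $\tau\cup\omega$), but both are valid.
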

\begin{proof}
  Fix a simplex $\sigma\in \Delta$ with $\goodsimp \subseteq \sigma$ and let $\newvert$ be the new vertex in $\Delta'$. Then the following cycle is the image of the chain $[\sigma]$ under $\iota_\goodsimp$:
  \[\rho_\sigma := \sum_{\substack{\tau\subset \sigma,~ \sigma\setminus \tau\subset \goodsimp, \\ \left|\tau \right| = \left|\sigma\right| - 1}}\frac{\ell(\sigma)}{\ell'(\tau \cup \{\newvert\})}[\tau\cup \{\newvert\}].\]

  As $\ell'$ is a lcm-labeling of $\Delta'$, the label on $\tau\cup \{\newvert\}$ is given by $\ell'(\tau\cup \{\newvert\}) = \lcm(\ell(\tau),\ell'(\newvert))$.
  Since $(\Delta',\ell')$ is a virtual compatible subdivision $\ell'(\newvert)$ divides $\ell(\goodsimp)$.
  Finally, since $\tau\subseteq \goodsimp$, we have $\ell(\tau)\mid \ell(\goodsimp)$.
  Consequently, the term $\frac{\ell(\goodsimp)}{\ell'(\tau \cup \{\newvert\})}$ is a monomial in $S$. As such the map $\iota_{\goodsimp}$ is a well-defined map $F_\Delta\rightarrow F_{\Delta'}$.

  As the image of the basis of $F_\Delta$ given by the classes associated to the simplices of $\Delta$ is linearly independent, the injectivity of $\iota_{\goodsimp}$ follows.

  Now assume that $F_\Delta$ is a virtual resolution. By Theorem~\ref{thm:subdivision-virtual}, $F_{\Delta'}$ is a virtual resolution and $H_i(F_\Delta)^\sim=H_i(F_{\Delta'})^\sim=0$ for $i>0$. This implies that the induced map $\iota_{\goodsimp})_*$ is trivially an isomorphism for $i>0$.

  For the case of $H_0$, note that $H_0(F_\Delta) = S/I$ where $I$ is the ideal generated by the vertices of $\Delta$. Similarly, we have $H_0(F_{\Delta'})=S/(I+\langle \ell'(\newvert)\rangle)$. Moreover, as the vertex labels of $\Delta$ are all vertex labels of $\Delta'$, the induced map is given by the quotient map $\varphi: S/I\rightarrow S/(I+\langle \ell'(\newvert)\rangle)$. However, as $\ell'(\newvert)\in \langle \ell(v_0),\ldots,\ell(v_r) \rangle : B^\infty \subseteq I:B^{\infty}$, it follows that the kernel of $\varphi$ is $B$-torsion and thus 
  $\varphi$ is an isomorphism on sheaves; see Remark~\ref{rem:vres-saturation}. 

  Thus we have that $\iota_\goodsimp$ induces an isomorphism $H_i(F_\Delta)^\sim\xrightarrow{\cong} H_i(F_{\Delta'})^\sim$ for all $i\geq 0$.
\end{proof}

Theorem~\ref{thm:subdivision-virtual} and the reframing described by Lemma~\ref{lem:induces-sheaf-isomorphism} show that the subdivision procedure provides a technique to construct new virtual resolutions from old. However, we seek constructions that \emph{reduce} this homology. For this purpose more work is needed, since we cannot hope that this local procedure always reduces homology globally. Moreover, a subdivision can in general introduce ``new'' homology, as in the following example.

  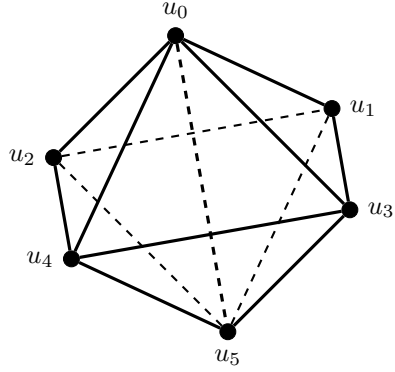
\begin{figure}[ht!]
    \[
    \begin{tikzpicture}
      \pgfmathsetmacro{\angleoffset}{100}
      \pgfmathsetmacro{\radius}{2}

      \node[circle, draw, inner sep=0pt, minimum size=6pt, fill=black, label=above:{$u_0$}] (H1) at (0+\angleoffset: \radius) {};
      \node[circle, draw, inner sep=0pt, minimum size=6pt, fill=black, label=left:{$u_2$}] (H3) at (70+\angleoffset: \radius) {};
      \node[circle, draw, inner sep=0pt, minimum size=6pt, fill=black, label=left:{$u_4$}] (H5) at (110+\angleoffset: \radius) {};
      \node[circle, draw, inner sep=0pt, minimum size=6pt, fill=black, label=right:{$u_3$}] (H4) at (250+\angleoffset: \radius) {};
      \node[circle, draw, inner sep=0pt, minimum size=6pt, fill=black, label=right:{$u_1$}] (H2) at (290+\angleoffset: \radius) {};
      \node[circle, draw, inner sep=0pt, minimum size=6pt, fill=black, label=below:{$u_5$}] (H6) at (180+\angleoffset: \radius) {};

      \draw[very thick] (H1) -- (H3) -- (H5) -- (H6) -- (H4) -- (H2) -- (H1);
      \draw[very thick] (H1) -- (H5) -- (H4) -- (H1);
      \draw[thick, dashed] (H3) -- (H2) -- (H6) -- (H3);
      \draw[very thick, dashed] (H1) -- (H6);
    \end{tikzpicture}
    \]
    \caption{A simplicial complex with vertices labeled by the names to be used throughout the section}
    \label{fig:sec5-diagram-labels}
  \end{figure}

\begin{example}
  \label{ex:new-homology}

  For this and future examples in this section we consider subdivisions of the simplicial complex $\Delta$ on the left of Figure~\ref{fig:homology-increases-example}, depicted with names for the vertices as in Figure~\ref{fig:sec5-diagram-labels}.

  Let $S=\field[x_0,x_1,y_0,y_1]$ be the Cox ring for $X=\mathbb{P}^1\times\mathbb{P}^1$.
  Then consider the following lcm-labeled simplicial complex $(\Delta,\ell)$ which has $H_1(F_{\Delta})_\alpha \cong \field$ for the fine degree given by $x^\alpha=x_0x_1y_0y_1^2$.

\begin{figure}[b!]
\[
  \begin{tikzpicture}
	\pgfmathsetmacro{\angleoffset}{100}
	\pgfmathsetmacro{\radius}{2}

    \coordinate (H5) at (0+\angleoffset: \radius) {};
    \coordinate (H6) at (70+\angleoffset: \radius) {};
    \coordinate (H4) at (110+\angleoffset: \radius) {};
    \coordinate (H2) at (180+\angleoffset: \radius) {};
    \coordinate (H1) at (250+\angleoffset: \radius) {};
    \coordinate (H3) at (290+\angleoffset: \radius) {};

    \node[circle, draw, inner sep=0pt, minimum size=12pt, fill=blue!50!white, draw=none, opacity=50] at (H1) {};
    \node[circle, draw, inner sep=0pt, minimum size=12pt, fill=blue!50!white, draw=none, opacity=50] at (H6) {};

    \node[circle, draw, inner sep=0pt, minimum size=6pt, fill=black, label=below right:$x_0x_1y_0y_1$] at (H1) {};
    \node[circle, draw, inner sep=0pt, minimum size=6pt, fill=black, label=right: $x_0^2y_0^2$] at (H3) {};
    \node[circle, draw, inner sep=0pt, minimum size=6pt, fill=black, label=left: $x_0x_1^2y_0$] at (H5) {};
    \node[circle, draw, inner sep=0pt, minimum size=6pt, fill=black, label=below: $x_0^3x_1y_1$] at (H4) {};
    \node[circle, draw, inner sep=0pt, minimum size=6pt, fill=black, label=right: $x_0^3x_1y_0$] at (H2) {};
    \node[circle, draw, inner sep=0pt, minimum size=6pt, fill=black, label=left:$y_0y_1^2$] at (H6) {};

    \draw[very thick] (H1) -- (H3) -- (H5) -- (H6) -- (H4) -- (H2) -- (H1);
    \draw[very thick] (H1) -- (H5) -- (H4) -- (H1);
    \draw[thick, dashed] (H3) -- (H2) -- (H6) -- (H3);
    \draw[very thick, dashed] (H5) -- (H2);

  \end{tikzpicture}
\qquad
   \begin{tikzpicture}
   
	\pgfmathsetmacro{\angleoffset}{100}
	\pgfmathsetmacro{\radius}{2}

    \coordinate (H5) at (0+\angleoffset: \radius) {};
    \coordinate (H6) at (70+\angleoffset: \radius) {};
    \coordinate (H4) at (110+\angleoffset: \radius) {};
    \coordinate (H2) at (180+\angleoffset: \radius) {};
    \coordinate (H1) at (250+\angleoffset: \radius) {};
    \coordinate (H3) at (290+\angleoffset: \radius) {};
    \coordinate (U1) at ($0.2*(H5)+0.8*(H4)$) {};

    \draw[line width = 5pt, orange!70!white, opacity=50, line cap=round] (H1) -- (U1);
    \draw[line width = 5pt, orange!70!white, opacity=50] (H6) -- (U1);
    \draw[line width = 5pt, orange!70!white, opacity=50] (H6) -- (H3);
    \draw[line width = 5pt, orange!70!white, opacity=50] (H3) -- (H1);

    \node[circle, draw, inner sep=0pt, minimum size=6pt, fill=black, label=below right:$x_0x_1y_0y_1$] at (H1) {};
    \node[circle, draw, inner sep=0pt, minimum size=6pt, fill=black, label=right: $x_0^2y_0^2$] at (H3) {};
    \node[circle, draw, inner sep=0pt, minimum size=6pt, fill=black, label=left: $x_0x_1^2y_0$] at (H5) {};
    \node[circle, draw, inner sep=0pt, minimum size=6pt, fill=black, label=below: $x_0^3x_1y_1$] at (H4) {};
    \node[circle, draw, inner sep=0pt, minimum size=6pt, fill=black, label=right: $x_0^3x_1y_0$] at (H2) {};
    \node[circle, draw, inner sep=0pt, minimum size=6pt, fill=black, label=left:$y_0y_1^2$] at (H6) {};

    \node[circle, draw, inner sep=0pt, minimum size=6pt, fill=white] at (U1) {};

    \draw[very thick] (H1) -- (H3) -- (H5) -- (H6) -- (H4) -- (H2) -- (H1);
    \draw[very thick] (H1) -- (H5) -- (H4) -- (H1);
    \draw[thick, dashed] (H3) -- (H6) -- (H2) -- (H3);
    \draw[very thick, dashed] (H5) -- (H2);
    \draw (H1) -- (U1) -- (H6);
    \draw[dashed] (U1) -- (H2);
    \node[circle, draw, inner sep=0pt, minimum size=6pt, fill=white, label=above right:$\,x_0x_1y_0y_1$] at (U1) {};

  \end{tikzpicture}
\]
    \caption{This figure depicts a labeled simplicial complex $(\Delta,\ell)$ and its subdivision $(\Delta',\ell')$ which introduces new homology,  despite being a virtual compatible subdivision.}
    \label{fig:homology-increases-example}
\end{figure}
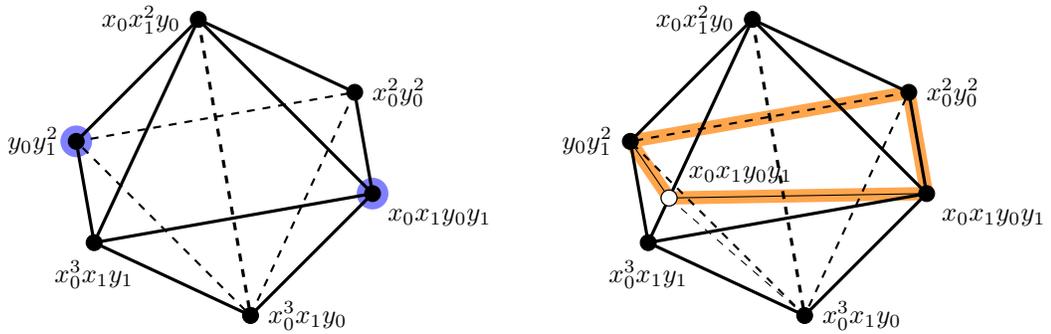

The subdivision of the $1$-simplex $\goodsimp=\{u_0,u_4\}$ may be made virtual compatible by providing the new vertex with the label $x_0x_1y_0y_1$. This satisfies the required conditions:
\begin{enumerate}
\item $x_0x_1y_0y_1$ divides $\ell(\goodsimp)=x_0^3x_1^2y_0y_1$. 
\item If $m\in x_0x_1y_0y_1 \cdot B^3$ then either $x_0^3$ or $x_1^2$ must divide $m$; in the former case $\ell(u_4)\mid m$, and in the latter case $\ell(u_0)\mid m$.
\end{enumerate}
Subdividing at a point in edge between the vertices $u_0$ and $u_4$ yields the lcm-labeled simplicial complex on the right of Figure~\ref{fig:homology-increases-example} which we denote $(\Delta',\ell')$

One checks that the homology previously found in $F_{\Delta}$ is eliminated in $F_{\Delta'}$.  In fact one even sees that $H_1(F_{\Delta'})=0$, for instance by noticing that there are only four vertex sets that induce a non-connected subcomplex, and none of them are of the form $\Delta_m$ for any monomial $m$.

Despite this, $F_{\Delta'}$ is not a free resolution. Indeed, $H_2(F_{\Delta'})_\beta \cong \field$ with degree given by $x^\beta=x_0^2x_1y_0^2y_1^2$, as shown by the shaded cycle above. Interestingly, the original complex has $H_2(F_\Delta)=0$, which can be shown by a similar check of all induced subcomplexes, so our procedure has introduced homology in a higher dimension than existed previously.
\end{example}

As noted earlier, since we are focusing on a small part of a large complex, we cannot expect the subdivision will force the entire complex to become free. Indeed, as just demonstrated in Example~\ref{ex:new-homology}, the resulting complex may even have more homology than the original complex. However, a mild additional divisibility condition forces the subdivsion to eliminate homology at least locally.

\begin{prop}
\label{subdivision-reduces-homology-part1}
Fix a smooth toric variety $X$ with Cox ring $S$.
  Let $(\Delta,\ell)$ be an lcm-labeled simplicial complex such that $F_\Delta$ is a virtual resolution.
  Fix a monomial $m=x^{\alpha}\in S$. Let $\Gamma\subseteq \Delta_m$ be a subcomplex and $\goodsimp=\{v_0,\ldots,v_r\}\in \Delta$ be a simplex such that the join $\Gamma\star\goodsimp$ is a subcomplex of $\Delta$.
  
  Let $(\Delta',\ell')$ be a subdivision of $(\Delta,\ell)$ at a point $\newvert$ in a simplex $\goodsimp\in \Delta$, and suppose that $\ell'(\newvert)\mid m$.
  If $\gamma$ is a class in $H_i(F_{\Delta})_\alpha \cong \redH_{i-1}(\Delta_m;\field)$ supported on $\Gamma$
  then $\gamma$ is in the kernel of the induced map $(\iota_\goodsimp)_*:H_i(F_{\Delta}) \rightarrow H_i(F_{\Delta'})$.
\end{prop}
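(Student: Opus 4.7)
The plan is to exhibit $\Gamma \star \{\newvert\}$ as a contractible subcomplex of $\Delta'_m$ containing $\Gamma$. This provides a null-homology for $\gamma$ after subdivision: the new vertex $\newvert$ serves as a cone point for $\Gamma$ inside the subdivided complex, and the hypothesis $\ell'(\newvert) \mid m$ guarantees that this cone actually lives inside $\Delta'_m$ rather than only inside $\Delta'$.

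First I would verify that $\Gamma \star \{\newvert\}$ is a subcomplex of $\Delta'_m$. Because $\Gamma \star \goodsimp \subseteq \Delta$, the vertex sets of $\Gamma$ and $\goodsimp$ are disjoint, so for any $\sigma \in \Gamma$ we have $\goodsimp \not\subseteq \sigma$ and $\sigma \cup \goodsimp \in \Delta$. By the definition of stellar subdivision, $\sigma \cup \{\newvert\}$ is then a simplex of $\Delta'$. Since $\ell'$ is an lcm-labeling, $\ell'(\sigma \cup \{\newvert\}) = \lcm(\ell(\sigma), \ell'(\newvert))$, and both factors divide $m$ by hypothesis, so $\sigma \cup \{\newvert\} \in \Delta'_m$. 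As $\Gamma \star \{\newvert\}$ is a cone with apex $\newvert$, it is contractible, so the inclusion $\Gamma \hookrightarrow \Gamma \star \{\newvert\}$ is zero on reduced homology, and any cycle supported on $\Gamma$ bounds already inside $\Gamma \star \{\newvert\} \subseteq \Delta'_m$.

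To connect this geometric picture back to $\iota_{\goodsimp}$, I would use the disjointness of vertex sets once more: every simplex $\sigma$ in the support of $\gamma$ lies in $\Gamma$, so $\goodsimp \not\subseteq \sigma$. Inspecting the formula for $\iota_{\goodsimp}$ from Lemma \ref{lem:induces-sheaf-isomorphism}, this means $\iota_{\goodsimp}([\sigma]) = [\sigma]$ for each such $\sigma$, so $\iota_{\goodsimp}(\gamma) = \gamma$ as a chain in $F_{\Delta'}$. Under the identification of Theorem \ref{thm:cellular-subcomplexes}, the class $(\iota_{\goodsimp})_*(\gamma) \in H_i(F_{\Delta'})_\alpha$ corresponds to the simplicial class of $\gamma$ in $\widetilde{H}_{i-1}(\Delta'_m;\field)$, which was just shown to vanish.

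The only subtlety is bookkeeping: ensuring that the labeling hypothesis is exactly what is needed to keep the cone in $\Delta'_m$ and confirming that the fractional coefficients $\ell(\sigma)/\ell'(\tau \cup \{\newvert\})$ appearing in $\iota_{\goodsimp}$ never intrude, since they arise only for simplices containing $\goodsimp$ and no such simplex appears in the support of $\gamma$. Once these identifications are fixed, the argument is essentially the contractibility of a cone.
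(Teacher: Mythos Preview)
Your proof is correct and follows essentially the same approach as the paper: exhibit $\Gamma\star\{\newvert\}$ as a cone inside $\Delta'_m$ so that any cycle supported on $\Gamma$ becomes null-homologous there, and observe that $\iota_\goodsimp$ acts as the identity on simplices of $\Gamma$ because none of them contain $\goodsimp$. Your write-up is in fact more careful than the paper's, spelling out via the stellar-subdivision definition and the lcm-labeling why each $\sigma\cup\{\newvert\}$ lands in $\Delta'_m$; the one small caveat is that the proposition as stated does not literally assume $\ell'$ is an lcm-labeling, though the paper's own proof tacitly uses the same fact.
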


\begin{proof}
  By assumption $\ell'(\newvert)\mid m$ and so $\newvert\in \Delta'_m$.
  Moreover, $\Gamma\star \newvert\subseteq \Delta'_m$ and $\Gamma\star \newvert$
  is a cone with cone point $\newvert$. Consequently, any homology class supported on $\Gamma$ is trivial in $\Delta'_m$. As $\gamma$ is supported on $\Gamma$ which has no vertices in $\goodsimp$, it follows that $(\iota_\goodsimp)_*(\gamma)=\gamma$, which is thus a trivial class in $\redH_i(\Delta'_m;\field)$. Hence $\gamma$ is in the kernel of the map $(\iota_\goodsimp)_*$ as desired.
\end{proof}

\begin{remark}
  As a consequence of Lemma~\ref{lem:induces-sheaf-isomorphism},  $(\iota_{\goodsimp})_*$ induces an isomorphism of sheaves. However this does not preclude $(\iota_\goodsimp)_*$ from having a non-zero $\gamma$ in its kernel. Such an element $\gamma$ in the kernel need only be $B$-torsion in the appropriate $H_i(F_\Delta)$.
\end{remark}

We illustrate Proposition~\ref{subdivision-reduces-homology-part1} in the following example. This example, unlike the previous examples, has $H_2(F_\Delta)_\alpha \cong \field$ with degree given by $x^\alpha=x_0x_1y_0y_1^2$.

  \begin{figure}[h!]
    \[
    \begin{tikzpicture}
      \pgfmathsetmacro{\angleoffset}{100}
      \pgfmathsetmacro{\radius}{2}

      \coordinate (A) at (0+\angleoffset: \radius) {};
      \coordinate (B) at (70+\angleoffset: \radius) {};
      \coordinate (C) at (110+\angleoffset: \radius) {};
      \coordinate (D) at (250+\angleoffset: \radius) {};
      \coordinate (E) at (290+\angleoffset: \radius) {};
      \coordinate (F) at (180+\angleoffset: \radius) {};

      \draw[line width = 5pt, blue!50!white, opacity=50, line cap=round] (B) -- (C);
      \draw[line width = 5pt, blue!50!white] (C) -- (D);
      \draw[line width = 5pt, blue!50!white] (D) -- (E);
      \draw[line width = 5pt, blue!50!white] (E) -- (B);

      \node[circle, draw, inner sep=0pt, minimum size=6pt, fill=black, label=above:$x_0x_1^2y_0$] (A) at (A) {};
      \node[circle, draw, inner sep=0pt, minimum size=6pt, fill=black, label=left: $x_0x_1y_0y_1$] (B) at (B) {};
      \node[circle, draw, inner sep=0pt, minimum size=6pt, fill=black, label=left: $x_0^2y_0^2$] (C) at (C) {};
      \node[circle, draw, inner sep=0pt, minimum size=6pt, fill=black, label=right: $y_0y_1^2$] (D) at (D) {};
      \node[circle, draw, inner sep=0pt, minimum size=6pt, fill=black, label=right: $x_0^2x_1y_1$] (E) at (E) {};
      \node[circle, draw, inner sep=0pt, minimum size=6pt, fill=black, label=below:$x_0^3x_1$] (F) at (F) {};

      \draw[very thick] (A) -- (B) -- (C) -- (F) -- (D) -- (E) -- (A);
      \draw[very thick, dashed] (A) -- (F);
      \draw[very thick] (A) -- (C) -- (D) -- (A);
      \draw[thick, dashed] (B) -- (F) -- (E) -- (B);

    \end{tikzpicture}
    \qquad\qquad
    \begin{tikzpicture}
      \pgfmathsetmacro{\angleoffset}{100}
      \pgfmathsetmacro{\radius}{2}

      \coordinate (A) at (0+\angleoffset: \radius) {};
      \coordinate (B) at (70+\angleoffset: \radius) {};
      \coordinate (C) at (110+\angleoffset: \radius) {};
      \coordinate (D) at (250+\angleoffset: \radius) {};
      \coordinate (E) at (290+\angleoffset: \radius) {};
      \coordinate (F) at (180+\angleoffset: \radius) {};
      \coordinate (G) at ($0.6*(A)+0.4*(F)$) {};
      
      \draw[line width = 5pt, blue!30!white, opacity=50, line cap=round] (B) -- (C);
      \draw[line width = 5pt, blue!30!white] (C) -- (D);
      \draw[line width = 5pt, blue!30!white,] (D) -- (E);
      \draw[line width = 5pt, blue!30!white,] (E) -- (B);

      \node[circle, draw, inner sep=0pt, minimum size=6pt, fill=white] (G) at (G) {};
      \draw[fill=blue!30!white, fill opacity=0.50] (B) -- (C) -- (D) -- (E) -- cycle;
      \node[label= below right:$x_0^2x_1y_0$] (G) at (G) {};
      
      \node[circle, draw, inner sep=0pt, minimum size=6pt, fill=black, label=above:$x_0x_1^2y_0$] (A) at (A) {};
      \node[circle, draw, inner sep=0pt, minimum size=6pt, fill=black, label=left: $x_0x_1y_0y_1$] (B) at (B) {};
      \node[circle, draw, inner sep=0pt, minimum size=6pt, fill=black, label=left: $x_0^2y_0^2$] (C) at (C) {};
      \node[circle, draw, inner sep=0pt, minimum size=6pt, fill=black, label=right: $x_0^2y_0y_1^2$] (D) at (D) {};
      \node[circle, draw, inner sep=0pt, minimum size=6pt, fill=black, label=right: $x_0^2x_1y_1$] (E) at (E) {};
      \node[circle, draw, inner sep=0pt, minimum size=6pt, fill=black, label=below:$x_0^3x_1$] (F) at (F) {};

      \draw[very thick] (A) -- (B) -- (C) -- (F) -- (D) -- (E) -- (A);
      \draw[very thick, dashed] (A) -- (F);
      \draw[very thick] (A) -- (C) -- (D) -- (A);
      \draw[thick, dashed] (B) -- (G) -- (D);
      \draw[dashed] (C) -- (G) -- (E);
      \draw[dashed] (B) -- (F) -- (E) -- (B);

    \end{tikzpicture}
    \]
    \caption{This figure depicts a labeled simplicial complex $(\Delta,\ell)$ and example of a subdivision $(\Delta', \ell')$ applied to a case where Proposition~\ref{subdivision-reduces-homology-part1} applies.}
    \label{fig:higher-dim-gamma-example}
  \end{figure}
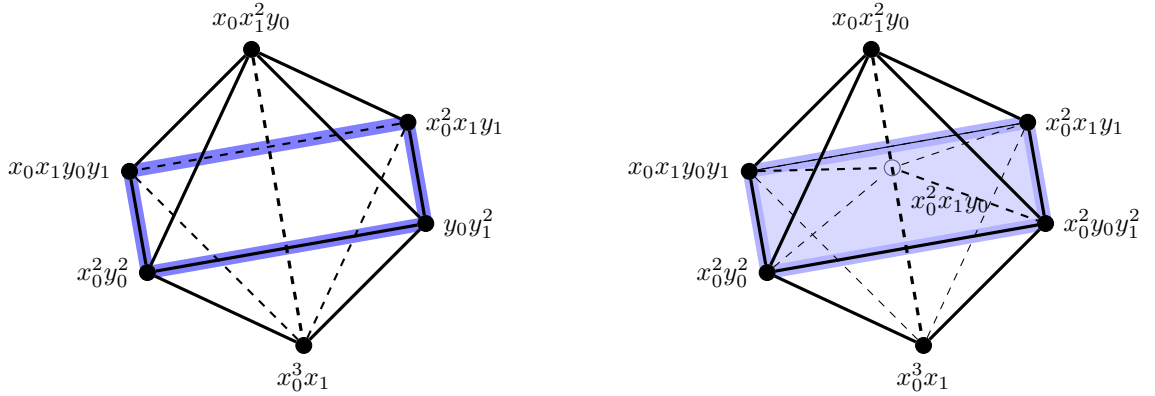

\begin{example}
  Let $S=\field[x_0,x_1,y_0,y_1]$ be the Cox ring for $X=\mathbb{P}^1\times\mathbb{P}^1$.
  In Figure~\ref{fig:higher-dim-gamma-example}, we draw a labeled simplicial complex $(\Delta,\ell)$ and the corresponding virtual compatible subdivision along the simplex $\goodsimp=\{u_0,u_5\}$ with the new vertex $v$ labeled $x_0^2x_1y_0$.

Setting $\Gamma$ to be the subcomplex induced by the vertex set $\{u_1,u_2,u_3,u_4\}$,  we may use Proposition~\ref{subdivision-reduces-homology-part1} with the 1-simplex $\{u_0,u_5\}$.  The label $x_0^2x_1y_0$ of the new vertex satisfies the conditions since
\begin{enumerate}
\item $x_0^2x_1y_0$ divides both $\ell(\goodsimp)=x_0^3x_1^2y_0$ and $\lcm(\ell(u_1),\ell(u_2),\ell(u_3),\ell(u_4))=x_0^2x_1y_0^2y_1^2$.
\item If $m\in x_0^2x_1y_0 \cdot B^1$ then either $x_0^3$ or $x_1^2$ must divide $m$; in the former case $\ell(u_0)\mid m$, and in the latter case $\ell(u_5)\mid m$.
\end{enumerate}

Thus the homology found in $F_\Delta$ is eliminated in $F_{\Delta'}$. In fact, $F_{\Delta'}$ is a free resolution. Any induced subcomplex containing the new vertex $\newvert$ has it as a cone point, and thus by comparison with $\Delta$ one sees that the only subsets of the vertices of $\Delta'$ which induce subcomplexes with homology are those that contain at least one pair of antipodal points from the vertices $u_0,\ldots,u_5$. However, if the subcomplex $\Delta_m$ contains any of the pairs of antipodal points, then $m$ is divisible by $x_0^2x_1y_0$ and thus $\Delta_m$ contains the new vertex $\newvert$. Hence by \cite{bps-monomial-resolutions}*{Lemma~2.2} the complex $F_{\Delta'}$ is a free resolution.
\end{example}

For the proof of Theorem~\ref{thm:main-theorem}, we will still want some control over the global behavior of the homology from this local operation. This we remedy with the following proposition, which describes when the induced map defines a surjection on homology in certain degrees.

\begin{prop}
  \label{subdivision-reduces-homology-part2}
Fix a smooth toric variety $X$ with Cox ring $S$.
  Let $(\Delta,\ell)$ be an lcm-labeled simplicial complex such that $F_\Delta$ is a virtual resolution.
  Let $(\Delta',\ell')$ be an lcm-labeled subdivision of $(\Delta,\ell)$ at a point $\newvert$ in a simplex $\goodsimp\in \Delta$. Write $\Gamma=\link_\goodsimp(\Delta)$.

  Fix a monomial $m=x^{\alpha}\in S$ and an integer $i>0$.
  Then the induced map on homology in degree $\alpha$,
  \[(\iota_{\goodsimp})_*:H_i(F_{\Delta})_{\alpha} \rightarrow H_i(F_{\Delta'})_{\alpha},\]
  is an isomorphism if $\ell(v)$ divides $m$ for some $v\in\goodsimp$ or if $\ell'(\newvert)$ does not divide $m$.
  Otherwise $(\iota_{\goodsimp})_*$ is a surjection if and only if the map of homology $\redH_{i-2}(\Gamma_m;\field)\rightarrow \redH_{i-2}(\Delta_m; \field)$ induced by the inclusion is injective.
\end{prop}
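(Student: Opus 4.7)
The approach is to identify $(\iota_\goodsimp)_*$ with a map induced by inclusion of subcomplexes on the level of simplicial chain complexes (via Theorem~\ref{thm:cellular-subcomplexes}), and then analyze how $\Delta_m$ sits inside $\Delta'_m$ in several cases determined by how $m$ interacts with the labels of $\omega$ and $\newvert$. In each case, either the two subcomplexes are equal, one is a stellar subdivision of the other, or they fit into a pushout to which Mayer--Vietoris can be applied.

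First I would handle the two ``easy'' cases yielding an isomorphism. If $\ell'(\newvert)$ does not divide $m$, then no simplex containing $\newvert$ lies in $\Delta'_m$ (since $\ell'(\newvert) \mid \ell'(\sigma)$ for every $\sigma \ni \newvert$ in the lcm-labeling), and also $\goodsimp \notin \Delta_m$ (since $\ell'(\newvert) \mid \ell(\goodsimp)$ by condition~(\ref{esmall})). Thus $\Delta'_m = \Delta_m$ and the chain-level formula for $\iota_\goodsimp$ restricted to degree $\alpha$ is the identity. If instead $\ell(v) \mid m$ for some $v \in \goodsimp$, set $\tau := \{v \in \goodsimp : \ell(v) \mid m\} \neq \emptyset$; then either $\tau = \goodsimp$, in which case $\goodsimp \in \Delta_m$ and $\Delta'_m$ is the ordinary stellar subdivision of $\Delta_m$ at $\newvert \in \goodsimp$, hence homotopy equivalent to $\Delta_m$, or $\tau \subsetneq \goodsimp$, in which case the argument of Theorem~\ref{thm:subdivision-virtual} applies verbatim: $\link_\newvert(\Delta'_m)$ is a cone on the non-empty simplex $\tau$ and hence contractible, so Mayer--Vietoris gives $\redH_*(\Delta_m) \xrightarrow{\cong} \redH_*(\Delta'_m)$.

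Next I would tackle the main case: $\ell'(\newvert) \mid m$ but $\ell(v) \nmid m$ for every $v \in \goodsimp$. Here no vertex of $\goodsimp$ lies in $\Delta_m$, so every simplex of $\Delta_m$ is disjoint from $\goodsimp$; in particular no simplex of $\Delta_m$ contains $\goodsimp$, so the chain-level map $\iota_\goodsimp$ restricted to degree $\alpha$ is the inclusion $\Delta_m \hookrightarrow \Delta'_m$. The crucial identification is
\[
\link_\newvert(\Delta'_m) = \Gamma_m.
\]
For this, take $\sigma \in \Delta'_m$ with $\newvert \notin \sigma$; then $\sigma \in \Delta$, and $\sigma \cup \{\newvert\} \in \Delta'_m$ iff $\sigma \cup \goodsimp \in \Delta$ (the defining condition of the subdivision, which since $\sigma \cap \goodsimp = \emptyset$ is exactly $\sigma \in \link_\goodsimp(\Delta) = \Gamma$) and $\ell'(\sigma \cup \{\newvert\}) \mid m$; in the lcm-labeling the latter is $\lcm(\ell(\sigma), \ell'(\newvert)) \mid m$, which is automatic once $\sigma \in \Delta_m$ and $\ell'(\newvert) \mid m$.

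With that identification, writing $\Delta'_m = \cpxstar_\newvert(\Delta'_m) \cup \Delta_m$ with $\cpxstar_\newvert(\Delta'_m)$ contractible and intersection $\Gamma_m$, I would apply the reduced Mayer--Vietoris sequence to obtain
\[
\cdots \to \redH_{i-1}(\Gamma_m) \to \redH_{i-1}(\Delta_m) \to \redH_{i-1}(\Delta'_m) \to \redH_{i-2}(\Gamma_m) \to \redH_{i-2}(\Delta_m) \to \cdots
\]
Exactness at $\redH_{i-1}(\Delta'_m)$ then gives that $(\iota_\goodsimp)_*$ is surjective if and only if the connecting map $\redH_{i-1}(\Delta'_m) \to \redH_{i-2}(\Gamma_m)$ is zero, equivalently (by the next exactness) the inclusion-induced map $\redH_{i-2}(\Gamma_m) \to \redH_{i-2}(\Delta_m)$ is injective.

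The main obstacle is verifying the identification $\link_\newvert(\Delta'_m) = \Gamma_m$ cleanly in the lcm-labeled setting, especially checking that the lcm-condition on $\ell'$ ensures every candidate simplex on the subdivision side already lives in $\Delta'_m$ (so no spurious label constraint cuts $\Gamma_m$ down further). Once this is established, the chain-level identification of $\iota_\goodsimp$ with the subcomplex inclusion and the invocation of Mayer--Vietoris are routine.
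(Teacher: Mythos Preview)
Your proposal is correct and follows essentially the same route as the paper: identify the degree-$\alpha$ strand of $\iota_\goodsimp$ with a map of simplicial chain complexes, dispose of the cases $\ell'(\newvert)\nmid m$ and $\goodsimp\in\Delta_m$ directly, and for the remaining cases apply Mayer--Vietoris to the decomposition $\Delta'_m=\cpxstar_\newvert(\Delta'_m)\cup\Delta_m$, computing the intersection to be $\Gamma_m$ precisely when no vertex of $\goodsimp$ lies in $\Delta_m$. The only organizational difference is that the paper runs Mayer--Vietoris once and treats the case $\emptyset\neq\tau\subsetneq\goodsimp$ inside that argument (showing the intersection is contractible), whereas you fold that sub-case into the ``isomorphism'' discussion by invoking the proof of Theorem~\ref{thm:subdivision-virtual}; the content is the same.
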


\begin{proof}
  We begin by reframing the induced map $(\iota_{\goodsimp})_*$ in terms of the homology of simplicial complexes.
  By Theorem~\ref{thm:cellular-subcomplexes} we have the following isomorphisms of vector spaces:
  \[H_i(F_{\Delta'})_\alpha \cong \redH_{i-1}(\Delta'_m;\field)\text{ and } H_i(F_{\Delta})_{\alpha} \cong \redH_{i-1}(\Delta_m;\field).\]
  Moreover, there is an identification between the degree $\alpha$ strand of the complex $F_\Delta$ with the reduced simplicial chain complex with coefficients in $\field $ for the simplicial complex $\Delta_m$ and respectively for $F_{\Delta'}$ and $\Delta'_m$.

  Next, we observe that the map  $\iota_{\goodsimp}$ respects the $\ZZ^n$-graded structure on $F_{\Delta}$ and $F_{\Delta'}$ and so is compatible with the above identification. As such, corresponding to a monomial $m$ we get a map of simplicial chain complexes $\iota_{\goodsimp,m}:C_i(\Delta_m;\field)\rightarrow C_i(\Delta'_m;\field)$ given by
  \[
  \iota_{\goodsimp,m}([\sigma]) = 
  \begin{cases}
   \displaystyle\sum_{\substack{\tau\subset \sigma,~ \sigma\setminus \tau\subset \goodsimp, \\ \left|\tau \right| = \left|\sigma\right| - 1}}[\tau\cup \{\newvert\}] & \goodsimp\subseteq \sigma,\\
    [\sigma] & \text{otherwise}.
  \end{cases}
  \]

  Then we wish to show that $\iota_{\goodsimp,m}$ induces either an isomorphism or a surjection on homology, subject to the appropriate conditions.

  If $\ell'(\newvert)$ does not divide $m$ then there is nothing to prove, since $\Delta_m=\Delta'_m$. If $\goodsimp$ is a simplex in $\Delta_m$, then the topology is unchanged by the subdivision, and the map $\iota_{\goodsimp,m}$ induces an isomorphism on the homologies of $\Delta_m$ and $\Delta_m'$ as desired.
  Otherwise, assume $\ell'(\newvert)$ divides $m$ and $\goodsimp\notin\Delta_m$. As in the proof of Theorem~\ref{thm:subdivision-virtual}, it follows that \[\Delta'_m=\Delta_m\cup \cpxstar_{\newvert}(\Delta'_m).\]

  Applying Mayer--Vietoris to this union, we get the following long exact sequence of homology:
\[
\cdots
\rightarrow
\begin{matrix}
  \redH_{i-1}(\Delta_m) \\[-3pt]
  \oplus \\[-3pt]
  \redH_{i-1}(\cpxstar_\newvert(\Delta'_m))
\end{matrix}
\xrightarrow{}
\redH_{i-1}(\Delta'_m)
\xrightarrow{\partial}
\redH_{i-2}(\Delta_m\cap \cpxstar_\newvert(\Delta'_m))
\rightarrow
\begin{matrix}
  \redH_{i-2}(\Delta_m) \\[-3pt]
  \oplus \\[-3pt]
  \redH_{i-2}(\cpxstar_\newvert(\Delta'_m))
\end{matrix}
\rightarrow \cdots.\]

As $\cpxstar_\newvert(\Delta'_m)$ is a cone and thus is acyclic, the complex reduces to
\begin{equation}
  \label{eq:mv-complex}
  \cdots \rightarrow \redH_{i-1}(\Delta_m) \xrightarrow{\iota_{\goodsimp,m}} \redH_{i-1}(\Delta'_m)\xrightarrow{\partial} \redH_{i-2}(\Delta_m\cap \cpxstar_\newvert(\Delta'_m)) \rightarrow \redH_{i-2}(\Delta_m) \rightarrow \cdots.
\end{equation}

Then we simplify $\Delta_m\cap \cpxstar_\newvert(\Delta'_m)$ as follows:
\begin{align*}
\Delta_m\cap \cpxstar_\newvert(\Delta'_m)
  &= \{\sigma\in\Delta_m: \sigma\cup\{\newvert\}\in \Delta_m'\} \\
  &= \{\sigma\in\Delta_m: \lcm(\ell(\sigma),\ell'(\newvert))\mid m ,~ \goodsimp\not\subseteq\sigma, \text{ and } \sigma\cup\goodsimp\in\Delta \} \\
  &= \{\sigma\in\Delta_m: \sigma\cup\goodsimp\in\Delta \}.
\end{align*}
The last equality holds since $\ell(\sigma)$ and $\ell'(\newvert)$ divides $m$ and $\goodsimp\not\in \Delta_m$.

Now suppose further that there exists a vertex $v\in\goodsimp$ such that $v\in \Delta_m$. Then since $\Delta_m$ is an induced subcomplex, there exists a unique maximal simplex $\tau\subseteq \goodsimp$ such that $\tau\in \Delta_m$ and $\tau\neq \emptyset$. Then observe that if $\sigma\cup \omega\in \Delta$ then $(\sigma\cup \tau)\cup \omega \in \Delta$ and so every maximal simplex in $\Delta_m\cap \cpxstar_{\newvert}(\Delta'_m)$ contains $\tau$ and thus $\Delta_m\cap \cpxstar_{\newvert}(\Delta'_m)$ is contractible. By exactness of \eqref{eq:mv-complex} this shows $(\iota_{\goodsimp,m})_*$ is an isomorphism if $\ell(v)$ divides $m$ for some $v\in \goodsimp$.

  Conversely, if no vertex $v\in\goodsimp$ is in $\Delta_m$, then $\Delta_m\cap \cpxstar_\newvert(\Delta'_m)=\Gamma_m$. 
  Then by exactness of \eqref{eq:mv-complex}, $(\iota_{\goodsimp,m})_*$ is a surjection if and only if $\partial=0$. Similarly $\partial=0$ if and only if the inclusion $\redH_{i-2}(\Gamma_m;\field) \rightarrow \redH_{i-2}(\Delta_m;\field)$ is injective. This yields the desired result: if $\ell(v)$ does not divide $m$ for any $v\in\goodsimp$ then $(\iota_{\goodsimp,m})$ is surjective if and only if the map $\redH_{i-2}(\Gamma_m;\field) \rightarrow \redH_{i-2}(\Delta_m;\field)$ is injective.
\end{proof}

Together the results in Proposition~\ref{subdivision-reduces-homology-part1} and Proposition~\ref{subdivision-reduces-homology-part2} form the proof of Theorem~\ref{thm:main-theorem}.

\begin{proof}[Proof of Theorem~\ref{thm:main-theorem}]
  The non-strict inequality follows directly from Proposition~\ref{subdivision-reduces-homology-part2}.

  For the case where $\redH_{i-1}(\Delta_m;\field)$ contains a non-zero class supported on $\Gamma$, we can apply Proposition~\ref{subdivision-reduces-homology-part1} and Proposition~\ref{subdivision-reduces-homology-part2} together to get the strict inequality.
\end{proof}

We conclude this section with the following example that demonstrates Theorem~\ref{thm:main-theorem} applied to a more complicated case.

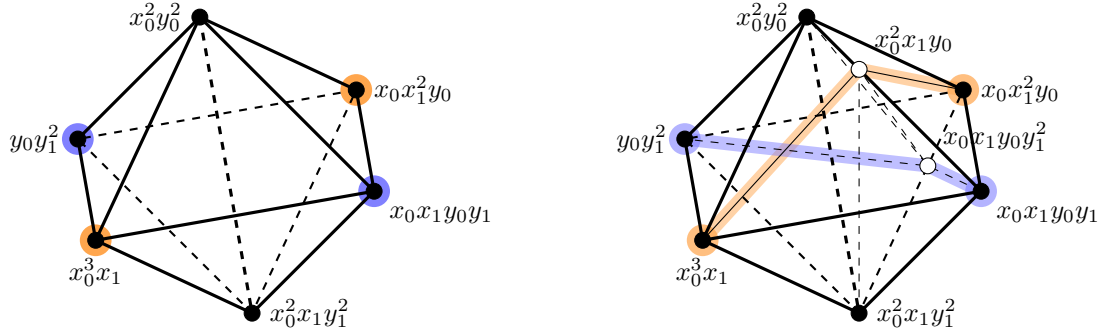
\begin{figure}[h!]
  \[
  \begin{tikzpicture}
    \pgfmathsetmacro{\angleoffset}{100}
    \pgfmathsetmacro{\radius}{2}

    \coordinate (H5) at (0+\angleoffset: \radius) {};
    \coordinate (H6) at (70+\angleoffset: \radius) {};
    \coordinate (H4) at (110+\angleoffset: \radius) {};
    \coordinate (H2) at (180+\angleoffset: \radius) {};
    \coordinate (H1) at (250+\angleoffset: \radius) {};
    \coordinate (H3) at (290+\angleoffset: \radius) {};

    \node[circle, draw, inner sep=0pt, minimum size=12pt, fill=blue!50!white, draw=none, opacity=50] at (H1) {};
    \node[circle, draw, inner sep=0pt, minimum size=12pt, fill=blue!50!white, draw=none, opacity=50] at (H6) {};
    \node[circle, draw, inner sep=0pt, minimum size=12pt, fill=orange!70!white, draw=none, opacity=50] at (H3) {};
    \node[circle, draw, inner sep=0pt, minimum size=12pt, fill=orange!70!white, draw=none, opacity=50] at (H4) {};

    \node[circle, draw, inner sep=0pt, minimum size=6pt, fill=black, label=below right:$x_0x_1y_0y_1$] at (H1) {};
    \node[circle, draw, inner sep=0pt, minimum size=6pt, fill=black, label=right: $x_0x_1^2y_0$] at (H3) {};
    \node[circle, draw, inner sep=0pt, minimum size=6pt, fill=black, label=left: $x_0^2y_0^2$] at (H5) {};
    \node[circle, draw, inner sep=0pt, minimum size=6pt, fill=black, label=below: $x_0^3x_1$] at (H4) {};
    \node[circle, draw, inner sep=0pt, minimum size=6pt, fill=black, label=right: $x_0^2x_1y_1^2$] at (H2) {};
    \node[circle, draw, inner sep=0pt, minimum size=6pt, fill=black, label=left:$y_0y_1^2$] at (H6) {};

    \draw[very thick] (H1) -- (H3) -- (H5) -- (H6) -- (H4) -- (H2) -- (H1);
    \draw[very thick] (H1) -- (H5) -- (H4) -- (H1);
    \draw[thick, dashed] (H3) -- (H6) -- (H2) -- (H3);
    \draw[very thick, dashed] (H5) -- (H2);

  \end{tikzpicture}
  \qquad\qquad
  \begin{tikzpicture}
    \pgfmathsetmacro{\angleoffset}{100}
    \pgfmathsetmacro{\radius}{2}

    \coordinate (H5) at (0+\angleoffset: \radius) {};
    \coordinate (H6) at (70+\angleoffset: \radius) {};
    \coordinate (H4) at (110+\angleoffset: \radius) {};
    \coordinate (H2) at (180+\angleoffset: \radius) {};
    \coordinate (H1) at (250+\angleoffset: \radius) {};
    \coordinate (H3) at (290+\angleoffset: \radius) {};

    \coordinate (U1) at ($0.3*(H1)+0.7*(H5)$) {};
    \coordinate (U2) at ($0.66*(H3)+0.34*(H2)$) {};
    
    \node[circle, draw, inner sep=0pt, minimum size=12pt, fill=blue!30!white, draw=none, opacity=50] at (H1) {};
    \node[circle, draw, inner sep=0pt, minimum size=12pt, fill=blue!30!white, draw=none, opacity=50] at (H6) {};
    \node[circle, draw, inner sep=0pt, minimum size=12pt, fill=orange!50!white, draw=none, opacity=50] at (H3) {};
    \node[circle, draw, inner sep=0pt, minimum size=12pt, fill=orange!50!white, draw=none, opacity=50] at (H4) {};

    \draw[line width = 5pt, orange!70!white,opacity=0.50] (H3) -- (U1);
    \draw[line width = 5pt, orange!70!white,opacity=0.50] (U1) -- (H4);
    \draw[line width = 5pt, blue!50!white,opacity=0.50] (H1) -- (U2);
    \draw[line width = 5pt, blue!50!white,opacity=0.50] (U2) -- (H6);

    \node[circle, draw, inner sep=0pt, minimum size=6pt, fill=black, label=below right:$x_0x_1y_0y_1$] at (H1) {};
    \node[circle, draw, inner sep=0pt, minimum size=6pt, fill=black, label=right: $x_0x_1^2y_0$] at (H3) {};
    \node[circle, draw, inner sep=0pt, minimum size=6pt, fill=black, label=left: $x_0^2y_0^2$] at (H5) {};
    \node[circle, draw, inner sep=0pt, minimum size=6pt, fill=black, label=below: $x_0^3x_1$] at (H4) {};
    \node[circle, draw, inner sep=0pt, minimum size=6pt, fill=black, label=right: $x_0^2x_1y_1^2$] at (H2) {};
    \node[circle, draw, inner sep=0pt, minimum size=6pt, fill=black, label=left:$y_0y_1^2$] at (H6) {};

    \draw[very thick] (H1) -- (H3) -- (H5) -- (H6) -- (H4) -- (H2) -- (H1);
    \draw[very thick] (H1) -- (H5) -- (H4) -- (H1);
    \draw[thick, dashed] (H3) -- (H6) -- (H2) -- (H3);
    \draw[very thick, dashed] (H5) -- (H2);
    \draw (H3) -- (U1) -- (H4);
    \draw[dashed] (H1) -- (U2) -- (H6);
    \draw[dashed] (H2) -- (U1) -- (U2) -- (H5);

    \node[circle, draw, inner sep=0pt, minimum size=6pt, fill=white, label= above right:$x_0^2x_1y_0$] at (U1) {};
    \node[circle, draw, inner sep=0pt, minimum size=6pt, fill=white, label= above right:$ x_0x_1y_0y_1^2$] at (U2) {};
  \end{tikzpicture}
  \]
  \caption{This figure depicts a labeled simplicial complex $(\Delta,\ell)$ and its two-fold virtual compatible subdivision $(\Delta'',\ell'')$ as arising in Example~\ref{ex:multiple-subdivisions}}
  \label{fig:multiple-subdivisions}
\end{figure}

\begin{example}
  \label{ex:multiple-subdivisions}
  Using the labeled simplicial complex $(\Delta,\ell)$ depicted on the left of Figure~\ref{fig:multiple-subdivisions}, the associated chain complex $F_\Delta$ has $H_1(F_\Delta)_\alpha \cong \field$ in two different degrees $\alpha\in\ZZ^4$, corresponding to the monomials $x_0x_1y_0y_1^2$ and $x_0^3x_1^2y_0$.  By subdividing twice we may obtain the figure on the right.

Explicitly, with $\goodsimp = \{u_0,u_3\}$, subdividing at a point in $\goodsimp$ yields $\Delta'$ and  we can label the point by $x_0^2x_1y_0$.
Checking the conditions of Theorem~\ref{thm:subdivision-virtual}, we find that this is a virtual compatible subdivision.

Observe that $\Gamma$ is the subcomplex of $\Delta$ induced by the vertices $\{u_1,u_4,u_5\}$.  We now check the conditions of Theorem~\ref{thm:main-theorem} for $m$ divisible by the new label $x_0^2x_1y_0$. Note the inclusion $\Gamma_m\hookrightarrow \Delta_m$ induces the identity on homology unless $\Delta_m$ contains either $u_0$, $u_1$ or $u_3$. In the first and last case, $m$ divides the label of a vertex in $\goodsimp$. If instead $u_1$ is a vertex of $\Delta_m$,  then $m$ is divisible by $x_0^2x_1y_0y_1^2$ and thus $\Delta_m$ also contains $u_3$, which is also a vertex of $\goodsimp$. Consequently, Theorem~\ref{thm:main-theorem} constrains the homology to not increase.

A similar check shows that Theorem~\ref{thm:main-theorem} applies to the second subdivision as well,
 and so $\dim H_{i}(F_{\Delta''})_\alpha\leq \dim H_{i}(F_\Delta)_{\alpha}$ for all $\alpha$ and all $i>0$.
  
A tedious but straightforward check shows that for the resulting labeled simplicial complex $(\Delta'',\ell'')$, the complex $F_{\Delta''}$ is free.
For instance, if $\redH_i(\Delta''_m;\field)\neq 0$ for some $i\geq 0$
then one easily checks that $m$ must have positive exponents on all four variables, and be divisible by both $y_1^2$ and $x_0^2$. This leaves eight possible vertex sets, all of which can indeed be realized as $\Delta_m$ for some such $m$, but each of the induced subcomplexes are contractible.
\end{example}

\section{Products of Projective Spaces}
\label{sec:products-case}

In this section, we restrict to the case where $X=\mathbb{P}^n\times \mathbb{P}^k$ to describe a class of examples where the techniques of this paper consistently give free resolutions from virtual resolutions containing homology. The class of such virtual resolutions are those arising from the bipyramid simplicial complex $\bp^k$ that also played a role in Proposition~\ref{prop:unique-min-vertices}.

Throughout this section, the vertices of the simplicial complex $\bp^k$, the bipyramid over a $k$-simplex, are named $v_0,\ldots,v_k,w_0,w_1$, where $v_0,\ldots,v_k$ are the vertices of the distinguished $k$-simplex in $\bp^k$ and $w_0$ and $w_1$ are the two other vertices.

\begin{prop}
  \label{prop:join-vres}
  Fix $X=\mathbb{P}^n\times \mathbb{P}^k$ for integers $n$ and $k$ with $n\geq k\geq 0$, 
  with Cox ring $S=\field[x_0,\ldots,x_n,y_0,\ldots,y_k]$.
  Set $\Delta=\bp^k$ and let $\ell$ be an lcm-labeling of $\Delta$.
  
  Then $F_\Delta$ is a virtual resolution if and only if either
  \begin{enumerate}
  \item there exists some $j$ such that $\ell(v_j)\mid \lcm(\ell(w_0),\ell(w_1))$, or
  \item up to a permutation of the variables there exist positive integers $p_0,\ldots,p_\ymax$ and monomials $m_0,\ldots, m_\ymax$ each dividing $\lcm(\ell(w_0),\ell(w_1))$ such that \[\ell(v_j) = y_j^{p_j}m_j.\]
  \end{enumerate}
\end{prop}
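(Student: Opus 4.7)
The plan is to use Theorem~\ref{thm:virtual-resolution} together with its corollaries, after first classifying the induced subcomplexes of $\bp^k$ that carry reduced homology.  Since $\ell$ is an lcm-labeling, $\Delta_m$ is always the induced subcomplex on the vertices whose labels divide $m$.  Any induced subcomplex of $\bp^k$ that omits one of $w_0, w_1$ sits inside a single top simplex, hence is a simplex itself; and one containing both $w_0,w_1$ together with some $v_j$ has $v_j$ as a cone point.  Both are contractible, so the only induced subcomplex of $\bp^k$ with non-trivial reduced homology is the two-point complex $\{w_0, w_1\}$.  Writing $m_0:=\lcm(\ell(w_0),\ell(w_1))$ and $E_j$ for the set of variables $z$ at which the exponent of $z$ in $\ell(v_j)$ strictly exceeds the exponent of $z$ in $m_0$, it follows that $\redH_*(\Delta_m;\field)\neq 0$ exactly when $m_0\mid m$ and $\ell(v_j)\nmid m$ for every $j$.

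The forward direction is then relatively direct.  Condition (1) is equivalent to some $E_j$ being empty, in which case that $v_j$ lies in every $\Delta_m$ with $m_0\mid m$, and $F_\Delta$ is even a free resolution.  For condition (2), after reindexing the $y$-variables so that $\ell(v_j)=y_j^{p_j}m_j$ with $m_j\mid m_0$ (and taking $p_j$ maximal so that $y_j\nmid m_j$), set $D:=\max_j p_j$.  For any $m\in B^{[D]}$ one has $(x_iy_l)^D\mid m$ for some $(i,l)$; then either $m_0\nmid m$, in which case $\Delta_m$ is a simplex, or $m_0\mid m$, in which case $m_l\mid m_0\mid m$ combined with $y_l^{p_l}\mid y_l^D\mid m$ yields $\ell(v_l)=y_l^{p_l}m_l\mid m$, so that $v_l\in\Delta_m$ and $\Delta_m$ is contractible.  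Corollary~\ref{cor:bracket-powers} then furnishes the virtual resolution.

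For the reverse direction, assume $F_\Delta$ is a virtual resolution and that (1) fails, so every $E_j$ is non-empty.  Applying Corollary~\ref{cor:annihilator-of-homology} to $m_0$ (for which $\redH_0(\Delta_{m_0})=\field$), for each minimal generator $b=x_iy_l$ of $B$ there exist $d$ and some $j$ with $\ell(v_j)\mid(x_iy_l)^d m_0$; for $d$ sufficiently large this condition is equivalent to $E_j\subseteq\{x_i,y_l\}$.  Thus for every pair $(i,l)\in\{0,\ldots,n\}\times\{0,\ldots,k\}$ there is some $j$ with $\emptyset\neq E_j\subseteq\{x_i,y_l\}$.

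The combinatorial core is to show that this covering condition, together with $n\geq k$, forces the structure of (2).  For a fixed $j$, the set of pairs $(i,l)$ for which $E_j\subseteq\{x_i,y_l\}$ has size $n+1$ when $E_j$ is a single $y$-variable, size $k+1$ when $E_j$ is a single $x$-variable, and size at most $1$ otherwise; since $n\geq k$, the maximum contribution of any single $j$ is $n+1$.  Because $k+1$ such contributions must cover all $(n+1)(k+1)$ pairs, each $j$ must saturate this bound and the covered pair-sets must be disjoint.  When $n>k$ this forces every $E_j$ to be a singleton $y$-variable with the $y$'s distinct across $j$; after permuting the $y$'s one obtains $E_j=\{y_j\}$ and thus condition (2) with $p_j$ the exponent of $y_j$ in $\ell(v_j)$ and $m_j:=\ell(v_j)/y_j^{p_j}$.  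The boundary case $n=k$ additionally permits every $E_j$ to be a singleton $x$-variable (disjointness forbids mixing $x$- and $y$-singletons, since any such pair of coverages meets at a common pair $(i,l)$), which is condition (2) after swapping the two blocks of variables using the $x\leftrightarrow y$ symmetry of $\mathbb{P}^k\times\mathbb{P}^k$.  This final combinatorial step, especially the treatment of the boundary case $n=k$, is the main obstacle.
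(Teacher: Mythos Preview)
Your proposal is correct and follows essentially the same route as the paper's proof: both reduce to the observation that $\{w_0,w_1\}$ is the unique induced subcomplex with homology, then use Corollary~\ref{cor:annihilator-of-homology} applied at $\lcm(\ell(w_0),\ell(w_1))$ to obtain a covering of all pairs $(i,l)$, and finish with the same counting argument ($(k+1)$ contributions of size at most $n+1$ must cover $(n+1)(k+1)$ pairs, forcing saturation and disjointness). Your sets $E_j$ are precisely the supports of the paper's monomials $m'_t$, and your pair-sets are the paper's $T_t$; your handling of the $n=k$ boundary case via the disjointness-forbids-mixing observation is slightly more explicit than the paper's, but the content is the same.
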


\begin{remark}
Note that if $n>k$ then by ``permutations of the variables'' we allow the indices to be shuffled among the $x_i$ and also separately among the $y_j$. If $n=k$, we also may swap the full set of variables $\{x_i\}$ with the full set $\{y_j\}$.
\end{remark}

\begin{proof}
  Throughout, write $f=\lcm(\ell(w_0),\ell(w_1))$.
  A key observation is that the complex $\Gamma=\{\{w_0\},\{w_1\},\emptyset\}$ is the only induced subcomplex of $\Delta$ with non-trivial reduced homology.  Since $\ell$ is an lcm-labeling, $\Delta_m$ is an induced subcomplex for any monomial $m\in S$ and so $\Delta_m$ has non-trivial homology if and only if $\Delta_m=\Gamma$.
  
  We begin with the reverse direction.
  Assume that $(\Delta,\ell)$ is a labeled simplicial complex satisfying either condition (1) or (2) of Proposition~\ref{prop:join-vres}.
  Suppose first that there exists an integer $0\leq i\leq k$ such that $\ell(v_i)\mid \lcm(\ell(w_0),\ell(w_1))$. If $\Delta_m$ contains $w_0$ and $w_1$ then $\ell(w_0)\mid m$ and $\ell(w_1)\mid m$. As $\ell(v_i)\mid \lcm(\ell(w_0),\ell(w_1))$, it follows that $\ell(v_i)\mid m$ and thus $v_i\in \Delta_m$. Thus there is no $m$ where $\Delta_m=\Gamma$, and thus $\Delta_m$ is always empty or contractible for all $m$. Thus by \cite{bps-monomial-resolutions}*{Lemma~2.2} $F_\Delta$ is a free resolution and thus a virtual resolution.

  Otherwise, we can find $p_0,\ldots,p_k$ be positive integers and
  $m_0,\ldots,m_k$ monomials dividing $f$ such that $\ell(v_i)=y_i^{p_i}m_i$.
  Let $m\in S$ be a monomial where $\Delta_m$ is non-empty with non-trivial homology.
  By Corollary~\ref{cor:annihilator-of-homology}, it suffices to show that for some $d\geq 0$, the complex $\Delta_{mx_i^dy_j^d}$ is contractible for all integers $i,j$ with $0\leq i\leq n$, $0\leq j \leq k$.

  Fix $d=\max(p_0,\ldots,p_k)$ and fix $i,j$ with $0\leq i\leq n$, $0\leq j \leq k$.
  As $\Delta_m$ has non-trivial homology, we have that $\Delta_m=\Gamma$ and thus $f\mid m$.
  Then as $\ell(v_j)=y_j^{p_j}m_j$, and $m_j\mid f$, it follows that $\ell(v_j)\mid mx_i^dy_j^d$.
  Thus we have $v_j\in \Delta_{mx_i^dy_j^d}$, and thus $\Delta_{mx_i^dy_j^d}$ has trivial homology as desired.
  This implies that $F_\Delta$ is a virtual resolution.

  For the forward direction, suppose that $F_\Delta$ is a virtual resolution.
  By Corollary~\ref{cor:annihilator-of-homology} there exists $d\geq 0$
  such that $\Delta_{fx_i^dy_i^d}$ has trivial homology for each $i,j$
  with $0\leq i\leq \xmax$ and $0\leq j\leq \ymax$.
  Suppose that $\Delta$ does not satisfy condition (1) and 
  let $m\in S$ be a monomial such that $\Delta_m=\Gamma$.
  For each $0\leq t \leq k$, set $m_t := \gcd(f,\ell(v_t))$ and $m'_t:=\ell(v_t)/m_t$.
  Because $\Delta$ does not satisfy condition (1), we have $\ell(v_t)\nmid f$ and thus $m'_t\neq 1$.
  Now define for each $0\leq t\leq k$ the following set of pairs of integers: \[T_t:=\{(i,j) : m'_t\text{ divides }x_i^{d'}y_j^{d'}\text{ for some }d'\geq 0\}.\]
  
  Observe that $\gcd\{x_i^{d'}y_j^{d'} : (i,j)\in T_t\}$ is divisible by $m'_t$ and thus isn't a unit. So in $T_t$, either all pairs have the same first coordinate, or all pairs have the same second coordinate. 

  For any pair $(i,j)$, the vertices $w_0$ and $w_1$ are in $\Delta_{fx_i^dy_j^d}$ and $\Delta_{fx_i^dy_j^d}\neq\Gamma$.
  As a consequence, for each of the pairs $(i,j)$, the complex $\Delta_{fx_i^dy_j^d}$ must also contain $v_t$ for some $0\leq t \leq k$. 
  This implies $\ell(v_t)$ divides $fx_i^dy_j^d$.
  As $m_t\cdot m'_t = \ell(v_t)$ and $m_t=\gcd(f,\ell(v_t))$ it follows that $\gcd(f,m_t')=1$.
  As $m'_t$ divides $fx_i^dy_j^d$, it then follows that $m_t'\mid x_i^dy_j^d$.
  Thus for integers $i$ and $j$ with $0\leq i \leq n$, $0\leq j\leq k$, each pair $(i,j)$ is in $T_t$ for at least one $t$.

  As $k\leq n$, if any set $T_t$ contains more than $n+1$ pairs, $m_t'$ would be a unit, thus we have $|T_t| \leq n+1$.
  On the other hand, as $0\leq t \leq k$ and there are $(n+1)(k+1)$ distinct pairs $(i,j)$ each of which must belong to at least one $T_t$, so it follows that $|T_t|= n+1$, and moreover that each pair is in exactly one $T_t$.

  Since $k+1\leq n+1$ and $\gcd\{x_i^dy_j^d : (i,j)\in T_t\}$ is not a unit,
  we find that each $T_t$ must consist of a collection of pairs sharing either the first or the second element of the pair.
  However as either each $T_t$ contains only elements having a common second component, or each $T_t$ contains only elements having a common first component,
  without loss of generality we may assume that $T_j$ contains pairs of the form $(i,j)$ for $0\leq i\leq n$.
  That is to say, $m'_j$ divides $x_i^dy_j^d$ for all $i$, and so $m'_j$ is a power of $y_j$.
  Thus there exists positive integers $p_0,\ldots,p_k$ and monomials $m_0,\ldots,m_k$ such that $\ell(v_i)=y_i^{p_i}m_i$ for all $0\leq i \leq k$.
\end{proof}

As an example, take $k=1$. If $F_\Delta$ is a virtual resolution that is not a free resolution then there exists integers $a,b\geq 0$ and monomials $m_0,m_1,f,g\in S$ with $m_0$ and $m_1$ dividing $\lcm(f,g)$ such that $(\Delta,\ell)$ is the following labeled simplicial complex:
\[
  \begin{tikzpicture}
           \fill[lightgray] (0,0) -- (2,0) -- (0,2) -- cycle;
           \fill[lightgray] (2,2) -- (2,0) -- (0,2) -- cycle;

    \node[circle, draw, inner sep=0pt, minimum size=6pt, fill=black, label=below:$f$] (2) at (0,0) {};
    \node[circle, draw, inner sep=0pt, minimum size=6pt, fill=black, label=above:$g$] (3) at (2,2) {};
    \node[circle, draw, inner sep=0pt, minimum size=6pt, fill=black, label=above:$y_0^am_0$] (4) at (0,2) {};
    \node[circle, draw, inner sep=0pt, minimum size=6pt, fill=black, label=below:$y_1^bm_1$] (5) at (2,0) {};
    \draw[very thick] (5) -- (4) -- (2) -- (5) -- (3) -- (4) -- (5); 
  \end{tikzpicture}
\]

Now consider the subdivision $(\Delta',\ell')$ a point $\newvert$ in the simplex $\{v_0,v_1\}$, where $\newvert$ receives label $m=\lcm(m_0,m_1)$:
\[
  \begin{tikzpicture}
           \fill[lightgray] (0,0) -- (2,0) -- (0,2) -- cycle;
           \fill[lightgray] (2,2) -- (2,0) -- (0,2) -- cycle;

    \node[circle, draw, inner sep=0pt, minimum size=6pt, fill=black, label=below:$f$] (2) at (0,0) {};
    \node[circle, draw, inner sep=0pt, minimum size=6pt, fill=black, label=above:$g$] (3) at (2,2) {};
    \node[circle, draw, inner sep=0pt, minimum size=6pt, fill=black, label=above:{$y_0^am_0$}] (4) at (0,2) {};
    \node[circle, draw, inner sep=0pt, minimum size=6pt, fill=black, label=below:{$y_1^am_1$}] (5) at (2,0) {};
    \node[circle, draw, inner sep=0pt, minimum size=6pt, fill=black, label=below:{$m$}] (6) at (1,1) {};
        \draw[very thick] (2) -- (3);
        \draw[very thick] (5) -- (4) -- (2) -- (5) -- (3) -- (4) -- (5); 
  \end{tikzpicture}
\]

It is not hard to check that this is in fact a free resolution. This construction can be done in general, using Theorem~\ref{thm:main-theorem} to show that any such labeling of $\bp^k$ that yields a virtual resolution can in fact be subdivided to arrive at a free resolution.

\begin{prop}
  \label{prop:no-higher-homology}
  Fix $X=\mathbb{P}^n\times \mathbb{P}^k$ for integers $n$ and $k$ with $n\geq k\geq 0$, 
  with Cox ring $S=\field[x_0,\ldots,x_n,y_0,\ldots,y_k]$.
  Set $\Delta=\bp^k$ and let $\ell$ be an lcm-labeling of $\Delta$
  such that $F_\Delta$ is a virtual resolution of $S/I$ for a monomial ideal $I\subseteq S$.

  If $(\Delta',\ell')$ is an lcm-labeled virtual compatible subdivision at a point $\newvert$ in $\omega:=\{v_0,\ldots,v_k\}$ satisfying $\ell'(\newvert)\mid \lcm(\ell(w_0),\ell(w_1))$, then $F_{\Delta'}$ is a free resolution of $S/J$, where $J=I + \langle\ell'(\newvert)\rangle$.
\end{prop}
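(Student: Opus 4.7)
The plan is to apply Theorem~\ref{thm:main-theorem} and exploit the highly restricted homology of induced subcomplexes of a bipyramid. The proof will proceed in three steps: first control the homology of $F_\Delta$, then verify the injectivity hypothesis of Theorem~\ref{thm:main-theorem}, and finally use the strict-inequality clause to eliminate all higher homology from $F_{\Delta'}$.

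First, I would show that the only induced subcomplex of $\bp^k$ with nontrivial reduced homology is the two-point complex $\Gamma := \link_\omega(\Delta) = \{\emptyset,\{w_0\},\{w_1\}\}$, which has $\redH_0(\Gamma;\field) = \field$. An induced subcomplex on a vertex set $V'$ is determined by $V' \cap \omega$ and $V' \cap \{w_0,w_1\}$, and in every case except $V' = \{w_0,w_1\}$ the result is either a single simplex or a join of a simplex with a smaller bipyramid, hence contractible. Since $\ell$ is an lcm-labeling, each $\Delta_m$ is induced, so Theorem~\ref{thm:cellular-subcomplexes} yields $H_i(F_\Delta) = 0$ for all $i \geq 2$, while $H_1(F_\Delta)_\alpha$ is either zero or one-dimensional, the latter occurring exactly when $\lcm(\ell(w_0),\ell(w_1)) \mid x^\alpha$ and no $\ell(v_j) \mid x^\alpha$.

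Next, I would verify the hypothesis of Theorem~\ref{thm:main-theorem}. For any monomial $m$ with $\ell'(\newvert) \mid m$ and $\ell(v_j) \nmid m$ for all $j$, the vertices of $\Delta_m$ all lie in $\{w_0,w_1\}$, so $\Delta_m = \Gamma_m$ and the inclusion $\Gamma_m \hookrightarrow \Delta_m$ is the identity, trivially injective on every homology group. Theorem~\ref{thm:main-theorem} then yields $\dim H_i(F_{\Delta'})_\alpha \leq \dim H_i(F_\Delta)_\alpha$ for every $i$ and $\alpha$, with strict inequality whenever $\ell'(\newvert) \mid x^\alpha$ and $\redH_{i-1}(\Gamma_{x^\alpha};\field) \neq 0$.

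Finally, I would conclude $H_i(F_{\Delta'}) = 0$ for all $i \geq 1$. For $i \geq 2$ the non-strict inequality suffices. For $i = 1$, any $\alpha$ with $H_1(F_\Delta)_\alpha \neq 0$ forces $\lcm(\ell(w_0),\ell(w_1)) \mid x^\alpha$; combined with the hypothesis $\ell'(\newvert) \mid \lcm(\ell(w_0),\ell(w_1))$ this gives $\ell'(\newvert) \mid x^\alpha$, and $\Gamma_{x^\alpha} = \Gamma$ satisfies $\redH_0 = \field \neq 0$, so the strict inequality collapses the one-dimensional $H_1(F_\Delta)_\alpha$ to zero. Hence $F_{\Delta'}$ is a free resolution of $H_0(F_{\Delta'}) = S/J$, where $J$ is generated by the vertex labels of $\Delta'$, namely $I + \langle \ell'(\newvert)\rangle$. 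The main conceptual content is the enumeration of induced subcomplexes of $\bp^k$; once that is in hand, the hypothesis of Theorem~\ref{thm:main-theorem} is immediate and the strict-inequality clause precisely closes the gap from ``$H_1(F_\Delta)_\alpha$ has dimension at most $1$'' to ``$H_1(F_{\Delta'}) = 0$.''
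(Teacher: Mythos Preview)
Your proposal is correct and follows essentially the same approach as the paper's proof: identify $\Gamma=\link_\omega(\Delta)$ as the unique induced subcomplex of $\bp^k$ with nontrivial reduced homology, observe that $\Gamma_m=\Delta_m$ whenever no $v_j$ lies in $\Delta_m$ so that the injectivity hypothesis of Theorem~\ref{thm:main-theorem} is automatic, and then use the divisibility $\ell'(\newvert)\mid\lcm(\ell(w_0),\ell(w_1))$ together with the strict-inequality clause to kill the at-most-one-dimensional $H_1(F_\Delta)_\alpha$. Your enumeration of induced subcomplexes is slightly more explicit than the paper's, which simply cites the analogous observation from the proof of Proposition~\ref{prop:join-vres}.
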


\begin{proof}
  Observe as in the proof of Proposition~\ref{prop:join-vres} that $\Delta_m$ is contractible or empty unless it is $\Gamma=\{\{w_0\},\{w_1\},\emptyset\}$.
  To show that $F_{\Delta'}$ is a free resolution, it suffices to show that for all $\alpha\in \NN^{n+k+2}$, $\dim H_i(F_{\Delta'})_{\alpha}=0$ for $i> 0$.

  Let $m=x^\alpha\in S$ be a monomial, and apply Theorem~\ref{thm:main-theorem} to the subdivsion. Observe that $\Gamma=\link_{\omega}(\Delta)$ as in the statement of Theorem~\ref{thm:main-theorem}.
  Suppose that $v_i\not\in \Delta_m$ for any $0\leq i\leq k$, then $\Delta_m\subseteq \Gamma$. As such, $\Gamma_m=\Delta_m\cap \Gamma = \Delta_m$ and so the inclusion $\Gamma_m\hookrightarrow \Delta_m$ induces an injection on homology in all homological degrees.
  Thus by Theorem~\ref{thm:main-theorem} we get
  \begin{equation}
    \label{eq:ineq-products}
    \dim H_i(F_{\Delta'})_\alpha\leq \dim H_i(F_{\Delta})_\alpha.
  \end{equation}

  Moreover, $H_i(F_{\Delta})_\alpha\neq 0$ if and only if $i=1$ and for the monomial $m=x^{\alpha}$ the subcomplex $\Delta_m$ is exactly $\Gamma$. However, if $\Delta_m=\Gamma$, then $\dim \redH_{0}(\Gamma_m;\field)=1\neq 0$. By Theorem~\ref{thm:main-theorem}, if $\ell'(\newvert)$ divides $m=x^{\alpha}$, then the inequality is strict.

  However, as $\ell'(\newvert)$ divides $\lcm(\ell(w_0),\ell(w_1))$ we find that if $\Delta_m=\Gamma$ then $\ell'(\newvert)$ divides $m$. Thus the inequality \eqref{eq:ineq-products} is strict whenever $H_i(F_\Delta)_\alpha\neq 0$. However, we also know $\dim H_i(F_{\Delta})_\alpha\leq 1$ for all $i>0$ and $\alpha\in \NN^{n+k+2}$. It follows that $\dim H_i(F_{\Delta'})_\alpha = 0$ for all $i>0$ and $\alpha\in \NN^{n+k+2}$.
  
  Consequently $F_{\Delta'}$ has no higher homology and is a free resolution. Finally, $F_{\Delta'}$ resolves $S/J$ for the monomial ideal $J$ generated by the labels on the vertices of $\Delta'$.
  As the only new vertex has label $\ell'(\newvert)$, we have $J=I + \langle\ell'(\newvert)\rangle$. Thus $F_{\Delta'}$ is a free resolution of $S/J$ as desired.
\end{proof}

This applying the results of this section yields a proof of the remaining theorem from the introduction. We recall that Theorem~\ref{thm:star-trivial} roughly seeks virtual resolutions of $S/I$ with vanishing higher homology, when $I$ arises from a virtual resolution on $\bp^k$.
\begin{proof}[Proof of Theorem~\ref{thm:star-trivial}]
  Let $\Delta$ be $\bp^k$, the bipyramid over a simplex.
  By Proposition~\ref{prop:join-vres}, up to a permutation of the variables, there exist integers $p_0,\ldots,p_k$ and monomials $m_0,\ldots,m_k$ not divisible by any $y_j$, such that
  $\ell(v_j)=y_j^{p_j}m_j$.

  Set $m:=\lcm(m_0,\ldots,m_k)$ and let $(\Delta',\ell')$ be the subdivision of $(\Delta,\ell)$ at $\newvert$ and label $\ell'(\newvert)=m$. We can check that this subdivision is a virtual compatible subdivision by checking the conditions in Theorem~\ref{thm:subdivision-virtual}.
  For $\goodsimp=\{v_0,\ldots,v_k\}$ it follows that $\ell(\goodsimp)=\prod_{j=0}^{k}y_j^{p_j}m$. Thus $\ell'(\newvert)\mid \ell(\goodsimp)$.
  Moreover, for $d=\max\{p_0,\ldots,p_k\}$ we get the inclusion
  \[\ell'(\newvert)\cdot \langle y_0,\ldots,y_k\rangle^d\subseteq \langle \ell(v_0),\ldots,\ell(v_k)\rangle.\]
  Since $B\subseteq \langle y_0,\ldots,y_k\rangle$, it follows that
  \[\ell'(\newvert)\cdot B^d\subseteq \langle \ell(v_0),\ldots,\ell(v_k)\rangle.\]
  Thus the monomial $\ell'(\newvert)$ is an element of $\langle \ell(v_0), \ldots ,\ell(v_k) \rangle : B^{\infty}$ as desired.
  
    Then applying Proposition~\ref{prop:no-higher-homology}, we find that the resulting complex $F_{\Delta'}$ is a free resolution. However, as $(\Delta',\ell')$ is a virtual compatible subdivision of $(\Delta,\ell)$ and $F_\Delta$ is a virtual resolution of $S/I$, it follows from  Theorem~\ref{thm:subdivision-virtual} that $F_{\Delta'}$ is a virtual resolution of $S/I$ with $H_i(F_{\Delta'})=0$ for $i>0$. Finally, note that as $\dim \Delta'=\dim \Delta$, the length of the complexes $F_{\Delta'}$ and $F_{\Delta}$ are the same.
\end{proof}

As a consequence of this theorem, even though we can construct labelings on the complex $\bp^k$ with non-trivial homology, there is always a free resolution of the same length that is a virtual resolution of the same module. This work provides new constraints on what modules may have short virtual resolutions with higher homology, though it remains to be seen whether there exist modules whose minimum-length virtual resolutions all have homology.

\section*{Acknowledgments}

The authors would like to thank Christine Berkesch for some early thoughts on the project, as well as the organizers of the 2023 Minnesota Research Workshop in Algebra and Combinatorics, Elise Catania, Sasha Pevzner, and Sylvester Zhang. The authors would also like to acknowledge the other participants in the group at MRWAC including Tomas Banuelos,
Pouya Layeghi,
Joseph McDonough,
Miranda Moore,
Mykola Sapronov,
Eduardo Torres D\'{a}vila,
 and
Mahrud Sayrafi.

\begin{bibdiv}
\begin{biblist}

\bib{bps-monomial-resolutions}{article}{
   author={Bayer, Dave},
   author={Peeva, Irena},
   author={Sturmfels, Bernd},
   title={Monomial resolutions},
   journal={Math. Res. Lett.},
   volume={5},
   date={1998},
   number={1-2},
   pages={31--46},
   issn={1073-2780},
   doi={10.4310/MRL.1998.v5.n1.a3},
}

\bib{bs-cellular}{article}{
   author={Bayer, Dave},
   author={Sturmfels, Bernd},
   title={Cellular resolutions of monomial modules},
   journal={J. Reine Angew. Math.},
   volume={502},
   date={1998},
   pages={123--140},
   issn={0075-4102},
   doi={10.1515/crll.1998.083},
}

\bib{virtual-res}{article}{
    AUTHOR = {Berkesch, Christine},
    AUTHOR = {Erman, Daniel},
    AUTHOR = {Smith, Gregory G.},
     TITLE = {Virtual resolutions for a product of projective spaces},
   JOURNAL = {Alg. Geom.},
    VOLUME = {7},
      YEAR = {2020},
    NUMBER = {4},
     PAGES = {460--481},
    doi = {10.14231/AG-2020-013},
}

  \bib{virtual-cm-paper}{article}{
   author={Berkesch, Christine},
   author={Klein, Patricia},
   author={Loper, Michael C.},
   author={Yang, Jay},
   title={Homological and combinatorial aspects of virtually Cohen-Macaulay
   sheaves},
   journal={Trans. London Math. Soc.},
   volume={9},
   date={2022},
   number={1},
   pages={413--434},
   doi={10.1112/tlm3.12036},
}

\bib{virtual-eagon-northcott}{article}{
   author={Booms-Peot, Caitlyn},
   author={Cobb, John},
   title={Virtual criterion for generalized Eagon-Northcott complexes},
   journal={J. Pure Appl. Algebra},
   volume={226},
   date={2022},
   number={12},
   pages={Paper No. 107138, 8},
   issn={0022-4049},
   doi={10.1016/j.jpaa.2022.107138},
}

    \bib{toric-varieties}{book}{
      author={Cox, David A.},
      author={Little, John B.},
      author={Schenck, Henry K.},
      title={
        {Toric varieties}},
      series={Graduate Studies in Mathematics~124},
      publisher={Amer. Math. Soc., Providence, RI},
      date={2011},
      pages={xxiv+841},
      doi={10.1090/gsm/124},
    }

\bib{lauren-truncation}{article}{
   author={Cranton Heller, Lauren},
   title={Explicit constructions of short virtual resolutions of
   truncations},
   journal={Bull. Lond. Math. Soc.},
   volume={57},
   date={2025},
   number={12},
   pages={4000--4014},
   issn={0024-6093},
   doi={10.1112/blms.70182},
}

\bib{HHL24}{article}{
      author={Hanlon, Andrew},
      author={Hicks, Jeff},
      author={Lazarev, Oleg},
      title={
        {Resolutions of toric subvarieties by line bundles and applications}},
      journal={Forum Math. Pi},
      volume={12},
      date={2024},
      pages={Paper No. e24, 58pp},
      doi={10.1017/fmp.2024.21},
    }

\bib{virtual-shelling}{article}{
  author = {Van Tuyl, Adam},
  author = {Yang, Jay},
  title = {Conditions for virtually Cohen--Macaulay Simplicial Complexes},
  year = {2025},
  journal = {Advances in Applied Mathematics},
  volume = {164},
  doi={10.1016/j.aam.2024.102830}
}

\end{biblist}
\end{bibdiv}

\end{document}